\documentclass[11pt]{amsart}
\usepackage{amsmath,amsthm,amssymb,latexsym,color}
\usepackage{hyperref}
\usepackage{lipsum}
\usepackage{todonotes}
\usepackage{multirow}
\usepackage[mathscr]{eucal}

\usepackage{algorithmicx,algorithm,algpseudocode}
\algnewcommand{\Input}{\item[\textbf{Input:}]}
\algnewcommand{\Output}{\item[\textbf{Output:}]}

\voffset-1.5 cm
\hoffset -1.5 cm
\textwidth 16 cm
\textheight 23 cm
\thispagestyle{empty}
\usepackage{xcolor}
\date{}
\usepackage{pgfplots}
\usepackage{tikz}
\usetikzlibrary{arrows,matrix,positioning}
\usepackage{graphicx}
\usepackage{subfigure}

\title[Convex hull of random points]
{On the probability that the convex hull of random points contains the origin}
\author{
Konstantin Tikhomirov
}
\address{
\medskip
\noindent
Department of Mathematical Sciences\\
Carnegie Mellon University\\
Wean Hall 6113\\
Pittsburgh, PA 15213\\
\texttt{\small
e-mail:   ktikhomi@andrew.cmu.edu}
}

\newtheorem{theorem}{Theorem}[section]
\newtheorem*{theorem*}{Theorem}

\newtheorem{lemma}[theorem]{Lemma}
\newtheorem{cor}[theorem]{Corollary}
\newtheorem{defi}[theorem]{Definition}
\newtheorem{prop}[theorem]{Proposition}

\theoremstyle{definition}
\newtheorem{Remark}[theorem]{Remark}
\newtheorem*{Remark*}{Remark}
\newtheorem*{Problem*}{Problem}

\theoremstyle{plain}


\newcommand{\Z}{\mathbb{Z}}
\newcommand{\R}{\mathbb{R}}

\newcommand{\Exp}{\mathbb{E}}

\newcommand{\Event}{\mathcal{E}}
\def\Prob{{\mathbb P}}

\newcommand{\sign}{\mathbf{Sign}}

\def\col{{\rm col}}

\def\row{{\rm row}}
\def\dist{{\rm dist}}
\def\Net{{\mathcal N}}

\def\LCD{{\rm LCD}}
\def\Comp{{\rm Comp}}
\def\Incomp{{\rm Incomp}}

\def\conv{{\rm conv}}

\begin{document}

\maketitle

\begin{abstract}
The classical theorem of Wendel provides an exact formula for
the probability that the convex hull of independent symmetrically distributed vectors in $\R^d$
contains the origin as long as the distributions of the vectors are continuous.
In this note, we provide an extension to Wendel's theorem for independent random vectors $X_1,\dots,X_n$ with i.i.d
components having a (possibly discrete) symmetric distribution of 
unit variance.
As a related observation, we
give sharp estimates on the probability that a random linear program of the form
``$\max\langle x,{\mathfrak c}\rangle\quad\mbox{subject to }\langle X_i,x\rangle\leq 1,\;i\leq n$'',
is bounded.
\end{abstract}

\section{Introduction}

\subsection{Literature overview and results}
The problem of computing the probability that the convex
hull of a set of random vectors contains the origin has attracted considerable attention;
see, in particular, \cite{Wendel,Eldan,TY17,KVZGAFA,KVZ17,VZ18,KZ20,GKZ21,HLO23}.
The following is a classical theorem of Wendel \cite{Wendel} based on combinatorial properties of hyperplane arrangements:
\begin{theorem}[\cite{Wendel}]\label{akjfblfjhblfjwhbflqjwhbf}
Let $n>d$, and let $X_1,X_2,\dots,X_n$ be independent random vectors in $\R^d$ such that
\begin{itemize}
\item Every $X_i$ is symmetrically distributed i.e $X_i\stackrel{d}{\sim}-X_i$, and
\item With probability one, every $d$--tuple of vectors are linearly independent.
\end{itemize}
Then the probability that the convex hull of $X_1,X_2,\dots,X_n$
contains the origin is given by
$$
p_{n,d}:=1-2^{-n+1}\sum_{k=0}^{d-1}{n-1\choose k}.
$$
\end{theorem}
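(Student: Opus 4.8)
The plan is to condition on the points and reduce the statement to a purely combinatorial count of sign patterns. First I would exploit symmetry: since the $X_i$ are independent and each is symmetric, the tuple $(X_1,\dots,X_n)$ has the same distribution as $(\varepsilon_1 X_1,\dots,\varepsilon_n X_n)$, where $\varepsilon_1,\dots,\varepsilon_n$ are i.i.d.\ Rademacher signs independent of the $X_i$. Conditioning on $X_1,\dots,X_n$, which by hypothesis almost surely lie in general position (every $d$ of them linearly independent), it suffices to evaluate the conditional probability, over the choice of signs, that $\conv\{\varepsilon_1 X_1,\dots,\varepsilon_n X_n\}$ contains the origin, and to check that it does not depend on the configuration; integrating the constant back out then gives the claim.

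Next I would characterize the sign vectors that \emph{exclude} the origin. By the separating hyperplane theorem (Gordan's alternative), since $\conv\{\varepsilon_i X_i\}$ is compact, the origin lies outside it if and only if there is a direction $v\in\R^d$ with $\varepsilon_i\langle v,X_i\rangle>0$ for every $i$; such a $v$ necessarily avoids every hyperplane $H_i:=\{u:\langle u,X_i\rangle=0\}$, and then $\varepsilon_i=\mathrm{sign}\langle v,X_i\rangle$. Hence the number of sign vectors $\varepsilon\in\{-1,+1\}^n$ excluding the origin is exactly the number of distinct patterns $(\mathrm{sign}\langle v,X_1\rangle,\dots,\mathrm{sign}\langle v,X_n\rangle)$ realized as $v$ ranges over $\R^d\setminus\bigcup_i H_i$. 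As this pattern is constant on each connected component of the complement and changes when a hyperplane is crossed, the count equals the number $r(n,d)$ of chambers cut out by the central arrangement $H_1,\dots,H_n$; general position ensures this is the same for every admissible configuration.

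It remains to evaluate $r(n,d)$, which is the crux. I would argue by the standard deletion--restriction recursion: inserting $H_n$ into an arrangement of $n-1$ hyperplanes in general position, the traces $H_n\cap H_i$ form a central arrangement of $n-1$ hyperplanes in general position inside the $(d-1)$-dimensional space $H_n$, cutting it into $r(n-1,d-1)$ chambers, and each such chamber splits exactly one existing chamber in two, so
\begin{equation*}
r(n,d)=r(n-1,d)+r(n-1,d-1),
\end{equation*}
with base values $r(n,1)=2$ and $r(1,d)=2$. One then verifies that $r(n,d)=2\sum_{k=0}^{d-1}\binom{n-1}{k}$ solves this recursion, using Pascal's identity to rewrite $\sum_{k=0}^{d-1}\binom{n-1}{k}=\sum_{k=0}^{d-1}\binom{n-2}{k}+\sum_{k=0}^{d-2}\binom{n-2}{k}$. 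Consequently the conditional probability that the origin is excluded is $r(n,d)/2^n=2^{-n+1}\sum_{k=0}^{d-1}\binom{n-1}{k}$, and the complementary event has probability $p_{n,d}$, as desired.

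I expect the main obstacle to lie in the clean bookkeeping of the region count: verifying that general position is inherited by the restriction $H_n\cap(H_1,\dots,H_{n-1})$ so that the recursion closes, and confirming that distinct chambers yield distinct sign vectors. Finally, for the interior statement I would observe that, under general position, the origin can never be on the boundary of $\conv\{\varepsilon_i X_i\}$: if it were, it would lie on a proper face of dimension at most $d-1$, hence (by Carath\'eodory) be a convex combination of at most $d$ of the points, which is a nontrivial linear dependence among $\le d$ of the $X_i$ and contradicts the hypothesis. Therefore ``contains the origin'' and ``contains the origin in its interior'' coincide on the full-probability general-position event, and both equal $p_{n,d}$.
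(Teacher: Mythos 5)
Your proposal is correct and follows exactly the argument the paper attributes to Wendel \cite{Wendel} (the paper itself only cites this classical result rather than reproving it): symmetrization by Rademacher signs reduces the problem to counting the chambers of the central hyperplane arrangement $\{X_i^\perp\}$ in general position, and the Schl\"afli-type recursion $r(n,d)=r(n-1,d)+r(n-1,d-1)$ yields $r(n,d)=2\sum_{k=0}^{d-1}\binom{n-1}{k}$, whence the exclusion probability $r(n,d)/2^n$ and the formula $p_{n,d}$. Your Carath\'eodory argument for why ``contains the origin'' and ``contains the origin in the interior'' coincide under the general-position hypothesis is also sound, so the proof is complete.
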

While the second condition in the above theorem is crucial, it
clearly fails whenever $X_1,\dots,X_n$ are i.i.d having 
discrete distributions. On the other hand,
numerous results within non-asymptotic random matrix theory
show that, up to appropriate normalization, collections of arbitrary random vectors with i.i.d components
share similar characteristics with standard Gaussian vectors.
The purpose of this note is to 
review the phenomenon in the context of Wendel's theorem.
First, we provide a matching statement 
for a class of random vectors with i.i.d components, allowing for discrete distributions:
\begin{theorem}\label{main}
For every $K\geq 1$ there is $c>0$ depending only on $K$ with the following property.
Let $\xi$ be a symmetric random variable of unit variance, and with a subgaussian\footnote{We refer to Preliminaries for the definition of a subgaussian random variable} moment bounded above by $K$.
Let $n>d$, and let $X_1,X_2,\dots,X_n$ be random vectors in $\R^d$
with i.i.d coordinates equidistributed with $\xi$.
Then
\begin{align*}
p_{n,d}\leq
\Prob\big\{\conv\{X_1,\dots,X_n\}\mbox{ contains the origin}\big\}
\leq p_{n,d}
+2\exp(-cd),
\end{align*}
and
\begin{align*}
p_{n,d}-2\exp(-cd)\leq
\Prob\big\{\conv\{X_1,\dots,X_n\}\mbox{ contains the origin in the interior}\big\}
\leq p_{n,d}.
\end{align*}
\end{theorem}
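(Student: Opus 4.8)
The plan is to route everything through the single random variable
\[
M:=\max_{v\in S^{d-1}}\ \min_{1\le i\le n}\langle v,X_i\rangle,
\]
noting that the origin lies outside the closed hull exactly when $M>0$ (a strictly separating direction exists), on the boundary exactly when $M=0$, and in the interior exactly when $M<0$. Thus $\Prob\{0\in\conv\{X_i\}\}=\Prob\{M\le 0\}$ and $\Prob\{0\in\operatorname{int}\conv\{X_i\}\}=\Prob\{M<0\}$, and the theorem splits into two tasks: the \emph{soft sandwich} $\Prob\{M<0\}\le p_{n,d}\le\Prob\{M\le 0\}$, and the \emph{quantitative} estimate $\Prob\{M=0\}\le 2\exp(-cd)$. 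Granting both, the first display follows from $\Prob\{M\le0\}=\Prob\{M<0\}+\Prob\{M=0\}\le p_{n,d}+2\exp(-cd)$ together with the lower sandwich bound, and the second display follows identically from $\Prob\{M<0\}=\Prob\{M\le0\}-\Prob\{M=0\}$. So the entire argument rests on these two ingredients.

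For the soft bounds I would perturb into general position. Let $G_1,\dots,G_n$ be independent standard Gaussian vectors, independent of the $X_i$, and set $X_i^{(t)}:=X_i+tG_i$ for $t>0$. Each $X_i^{(t)}$ is symmetric and, almost surely, every $d$ of them are linearly independent, so Theorem~\ref{akjfblfjhblfjwhbflqjwhbf} applies and gives $\Prob\{0\in\conv\{X_i^{(t)}\}\}=p_{n,d}$ for all $t>0$. Since $\{0\notin\conv\{X_i\}\}$ and $\{0\in\operatorname{int}\conv\{X_i\}\}$ are both stable under small enough perturbations (strict separation, respectively interiority, persists), one gets almost-sure domination of indicator functions along any sequence $t_k\downarrow 0$, and Fatou's lemma yields $\Prob\{0\notin\conv\{X_i\}\}\le 1-p_{n,d}$ and $\Prob\{0\in\operatorname{int}\conv\{X_i\}\}\le p_{n,d}$, which are exactly the two soft inequalities.

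The crux is $\Prob\{M=0\}\le 2\exp(-cd)$. On $\{M=0\}$ the origin sits on a proper face, so in the generic situation there is a $d$--subset $S\subset[n]$ whose vectors lie in a common hyperplane $v_S^\perp$ through the origin, with the origin in the relative interior of $\conv\{X_i:i\in S\}$ and all remaining points on one side, $\langle v_S,X_j\rangle\ge 0$ for $j\notin S$. I would union bound over the $\binom{n}{d}$ choices of $S$. Fixing $S=\{1,\dots,d\}$ and $A=[X_1\,|\,\cdots\,|\,X_d]$, the first two requirements say that $A$ is singular and that its (generically one-dimensional) kernel is spanned by a vector $\mu^\ast$ all of whose coordinates have the same sign. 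The decisive observation is that the event $\{\det A=0\}$ and the law of $A$ are invariant under the sign-flips $X_i\mapsto\varepsilon_iX_i$, under which $\mu^\ast\mapsto(\varepsilon_i\mu^\ast_i)_i$; hence, conditionally on $\det A=0$ (with no vanishing coordinate), the sign pattern of $\mu^\ast$ is \emph{uniform} on $\{\pm1\}^d$, so the probability it is constant equals $2^{-d+1}$. Combining this with the independence of $X_{d+1},\dots,X_n$ and the symmetry estimate $\Prob\{\langle v_S,X_j\rangle\ge0\}\le\tfrac12$, the probability attached to a single $S$ is at most $2^{-(n-d)+1}\cdot2^{-d+1}\,\Prob\{\det A=0\}=2^{-n+2}\,\Prob\{\det A=0\}$. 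Summing over $S$ and using $\binom{n}{d}2^{-n}\le1$ collapses the dangerous factor $\binom{n}{d}$ entirely, leaving $\Prob\{M=0\}\lesssim\Prob\{\det A=0\}\le\exp(-c_0d)$, the final step being the known exponential singularity bound for a square matrix with i.i.d.\ symmetric subgaussian entries.

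The hard part will be making this rigorous uniformly in $n$, and that is where the real work sits. The sign-flip symmetry supplies precisely the factor $2^{-d}$ needed to cancel $\binom{n}{d}2^{-(n-d)}\sim 2^{d}$, but the cancellation only goes through when the facet normal $v_S$ is genuinely spread out: in general $\Prob\{\langle v_S,X_j\rangle\ge0\}=\tfrac12+\tfrac12\Prob\{\langle v_S,X_j\rangle=0\}$ may exceed $\tfrac12$, and the clean one-sided factor $2^{-(n-d)}$ degrades. I would therefore split along a Rudelson--Vershynin-type incompressible/compressible dichotomy: on the incompressible part, least-common-denominator small-ball estimates give $\Prob\{\langle v_S,X_j\rangle=0\}\le\exp(-cd)$, so the product over $j\notin S$ stays within a constant factor of $2^{-(n-d)}$ (one checks the resulting sum over $n$ peaks near $n\approx d/\ln 2$ and remains exponentially small); on the compressible part, a net argument shows that having $d$ independent subgaussian vectors nearly orthogonal to a sparse direction is itself exponentially unlikely. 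One must also dispose of the non-generic configurations set aside above---active faces of dimension below $d-1$, and kernels with vanishing coordinates---but these are stronger degeneracies forcing even more remaining points to one side, so a strictly better union bound of the same shape controls them. Assembling the incompressible main term, the compressible correction, and the lower-dimensional faces delivers $\Prob\{M=0\}\le 2\exp(-cd)$ and closes the argument.
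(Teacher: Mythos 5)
Your skeleton coincides with the paper's: soft bounds by reduction to Wendel via perturbation/approximation (the paper's Proposition~\ref{aiygveaiywcvywviey}; your Gaussian smoothing argument is a fine substitute), reduction of both displays to showing $\Prob\{{\bf 0}\in\partial\conv\{X_i\}\}\leq 2\exp(-cd)$, and, in the generic case, exactly the paper's mechanism: sign-flip symmetry giving the conditional factor $2^{1-d}$ (Lemma~\ref{alkhjfbkefveutcfujefw}), the one-sided factor $\approx 2^{-(n-d)}$ obtained by conditioning on the facet, collapse of the union bound via ${n\choose d}2^{-n}\leq 1$, and the compressible/incompressible--LCD machinery to control the atoms $\Prob\{\langle y,X_j\rangle=0\}$. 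Up to that point your outline is sound.

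The genuine gap is your final claim that the degenerate configurations --- the origin lying in the relative interior of $\conv\{X_i:i\in J\}$ with $|J|=\ell<d$ (equivalently, kernels with vanishing coordinates or rank drop below $d-1$) --- are ``stronger degeneracies forcing even more remaining points to one side, so a strictly better union bound of the same shape controls them.'' Two things break. First, the decoupling that powers your generic estimate is no longer available: when $\ell<d$ the vectors $X_i$, $i\in J$, span a subspace of dimension $\ell-1<d-1$, so the supporting normal is \emph{not} measurable with respect to $(X_i)_{i\in J}$, and you cannot condition on $X_J$ and factor out $2^{-(n-\ell)}$ for the remaining points; some hyperplane through the origin containing their span must be pinned down measurably. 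The paper's fix is to condition on a full set $I\supset J$ of $d$ affinely independent facet vertices (the events $\Event_{J,I}$), which determines the normal up to sign --- but this reintroduces combinatorial factors ${n+1\choose d}{d\choose\ell}$ for the choice of $(J,I)$, plus a further ${d\choose\ell-1}$ inside Lemma~\ref{alkhjfbkefveutcfujefw}. Second, with those factors the union bound is emphatically not ``of the same shape, only better'': after absorbing ${n+1\choose d}2^{-n+d-\ell}$, one is left with terms of order ${d\choose\ell}{d\choose\ell-1}2^{d-\ell}\exp(-c\ell)$, where $c$ is the tiny constant coming from LCD/small-ball estimates; for intermediate $\ell$, say $\ell\approx d/2$, this is exponentially \emph{large} in $d$. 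That is precisely why the paper splits the range of $\ell$ at condition \eqref{apkjfnfbfygviy}: only for $\ell$ very close to $d$ does the same-shape bound close, while for $d/\ell\geq 1+s$ an entirely different mechanism is required --- Lemma~\ref{ajhgcvutyrwvutqwrc}, the statement that a tall $d\times\ell$ subgaussian matrix has no incompressible null vector except with probability $\exp(-Mv)$, where the exponent $M$ can be made large enough to beat the crude $2^{d}2^{n+1}$ cardinality of the union --- together with the global compressible-kernel event ($\Event_4$ in the paper, via Lemma~\ref{aljhfakfjhebfkajhbfkahvaghf}) which is what guarantees $|J|\geq\sqrt{d}$ in the first place. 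This regime split and the tall-matrix injectivity argument are the bulk of the paper's proof and are absent from your proposal, so as written it does not close. (Your separate worry about uniformity in $n$ is a non-issue: by monotonicity in $n$ of both probabilities, it suffices to treat $1.5d\leq n\leq 3d$, as the paper notes.)
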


\begin{Remark}
Note that Theorem~\ref{main} identifies $\frac{n}{d}=2\pm O(d^{-1/2})$ as the window
in which transition from ``does not contain the origin w.h.p''
to ``contains the origin w.h.p'' occurs.
\end{Remark}

\begin{Remark}[General position]
We emphasize that 
the above theorem covers certain cases in which the random vectors
$X_1,\dots,X_n$ are {\bf not} in general position with high probability.
As an example, let $n=2d$, and let
$X_1,\dots,X_n$ have i.i.d $\pm1$ (Rademacher)
coordinates. It is not difficult to verify that in such setting
the collection of vectors $\{X_1,\dots,X_n\}$ is not in general position
with probability tending to one as $d\to\infty$,
that is, no trivial reduction to the original Wendel theorem is available.
See Subsection~\ref{subs:overview} below for a related discussion.
\end{Remark}

\medskip

Our second observation deals with random linear programs.
Fix parameters $n$ and $d$, a non-zero cost vector ${\mathfrak c}$, and consider a random linear program of the form
\begin{equation}\label{ljhrbojvhbljhb}
\max\langle x,{\mathfrak c}\rangle\quad\mbox{subject to }Ax\leq {\bf 1},
\end{equation}
where 
${\bf 1}$ is the $n$--dimensional vector of ones,
and $A$ is an $n\times d$ random matrix with i.i.d rows.
Computational complexity of the
shadow simplex method for linear programs of this form were studied by Borgwardt \cite{B82,B87,B99}
(see also \cite{ST04,V09,DH18,HLZ22} for smoothed analysis of 
the shadow simplex method).
Consider the following question: under what conditions on the distributions of the matrix rows
and parameters $d,n$ the program \eqref{ljhrbojvhbljhb}
is bounded w.h.p? In the setting of continuous distributions, \cite{Wendel}
provides a complete answer:
\begin{cor}[Corollary of the result of \cite{Wendel}, see Remark~\ref{aygveuwactuetuc} below]\label{afiyveyiyagiyge}
Let $n\geq d$, and let $A$ be an $n\times d$ random matrix with independent rows such that
\begin{itemize}
\item Every row is symmetrically distributed, and
\item For any fixed hyperplane $H$ and $i\leq n$,
the probability that $\row_i(A)$ belongs to $H$ is zero.
\end{itemize}
Then for any non-zero cost vector ${\mathfrak c}$,
$$
\Prob\big\{\mbox{Linear program \eqref{ljhrbojvhbljhb} is bounded}\big\}=p_{n+1,d}
=1-2^{-n}\sum_{k=0}^{d-1}{n\choose k}.
$$
\end{cor}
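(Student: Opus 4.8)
The Corollary states that for a linear program "max ⟨x,c⟩ subject to Ax ≤ 1" where A is n×d random with symmetric rows in general position, the probability the LP is bounded equals p_{n+1,d}.

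**Key insight: connecting LP boundedness to convex hull containing origin.**

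The LP max ⟨x,c⟩ subject to Ax ≤ 1 is bounded iff... let me think about the geometry.

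The feasible region is {x : Ax ≤ 1} = {x : ⟨row_i(A), x⟩ ≤ 1 for all i}. This is a polyhedron containing 0 (since A·0 = 0 ≤ 1).

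The LP with objective c is unbounded iff there's a direction of recession, i.e., a direction y with ⟨c,y⟩ > 0 such that the ray from any feasible point in direction y stays feasible. The recession cone is {y : Ay ≤ 0} = {y : ⟨row_i(A), y⟩ ≤ 0 for all i}.

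So the LP is **unbounded** iff there exists y with ⟨row_i(A), y⟩ ≤ 0 for all i AND ⟨c, y⟩ > 0. The LP is **bounded** iff no such y exists, i.e., for every y with ⟨c,y⟩ > 0, some constraint ⟨row_i(A), y⟩ > 0.

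Equivalently: bounded iff the recession cone {y : Ay ≤ 0} is contained in the halfspace {⟨c,y⟩ ≤ 0}.

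**Connecting to Wendel's theorem.**

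By LP duality / Gordan's theorem: the LP is bounded iff c is in the cone generated by the rows {row_i(A)}. Actually, bounded iff c ∈ cone{row_1,...,row_n} (the cost vector is expressible as nonneg combination of constraint normals).

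Let me verify: Bounded iff {y : Ay ≤ 0, ⟨c,y⟩ > 0} = ∅. By Farkas/Gordan, {y : Ay ≤ 0, ⟨c,y⟩ > 0} = ∅ iff c ∈ cone{rows of A} = {A^T λ : λ ≥ 0}.

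So **LP bounded iff c ∈ cone{row_1(A),...,row_n(A)}**.

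**The trick to reduce to convex hull containing origin.**

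We want P{c ∈ cone{X_1,...,X_n}} where X_i = row_i(A).

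Consider the n+1 vectors in R^d: X_1,...,X_n together with... hmm. We need to convert "cone contains c" to "convex hull contains origin."

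Standard trick: c ∈ cone{X_1,...,X_n} iff 0 ∈ conv{X_1,...,X_n, -c·(large)}? Not quite, because of scaling.

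Better: c ∈ cone{X_i} iff 0 ∈ conv{X_1 - c,...}? No.

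Let me think again. The claim uses p_{n+1,d}, suggesting we add ONE more vector and apply Wendel with n+1 vectors.

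**The right reduction:** Consider vectors Y_1,...,Y_n, Y_{n+1} where Y_i for the symmetric setup. The cone{X_i} contains c iff...

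Here's the idea: c ∈ cone{X_1,...,X_n} iff 0 ∈ conv{X_1,...,X_n, -εc} fails... Let me use a cleaner equivalence. Since we can scale, c ∈ cone{X_i} iff for some t>0, tc ∈ conv of rays.

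Actually: **0 ∈ conv{X_1,...,X_n, -c}** would require -c to balance a convex combination of X_i's. If 0 = Σλ_i X_i + μ(-c) with λ_i,μ ≥ 0 summing to 1 and μ > 0, then c = (1/μ)Σλ_i X_i ∈ cone{X_i}. But if μ=0 then 0∈conv{X_i}. So 0 ∈ conv{X_1,...,X_n,-c} iff (c ∈ cone{X_i}) OR (0 ∈ conv{X_i}).

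To isolate the cone condition, replace -c by a "vector at infinity." Homogenize: lift to R^{d+1}. Map X_i → (X_i, 1) and consider whether (c, 0)...

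**Cleaner homogenization:** The LP max⟨c,x⟩ s.t. Ax ≤ 1 in R^d. Bounded iff c ∈ cone{rows}. Now cone{X_1,...,X_n} ∋ c. Append the vector **X_{n+1} := -c** but symmetrized won't work since c is fixed.

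**The actual approach used (Remark 2.x):** Probably they show boundedness corresponds to 0 being in convex hull of n+1 random vectors. The symmetry of rows + the sign of the extra constraint. Given the answer p_{n+1,d} with n+1 vectors in R^d, and Wendel needs symmetric vectors in general position, I expect:

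The n+1 vectors are the rows of A lifted, PLUS a construction making c-direction symmetric. One embeds the problem into R^d with n+1 symmetric vectors: take X_1,...,X_n and an (n+1)-th vector whose distribution relative to c can be made symmetric by conditioning on random sign (since only the cone matters, and boundedness is a sign-symmetric event in an appropriate sense).

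Conclusion: **Apply Wendel's Theorem (Theorem 1.1) to n+1 symmetric vectors in general position in R^d**, getting probability p_{n+1,d} that their convex hull contains the origin, and identify this event with LP boundedness via the cone/homogenization duality above. The general-position hypothesis on the rows transfers to the lifted vectors, and the arithmetic identity 1 − 2^{-n}Σ_{k=0}^{d-1}(n choose k) = p_{n+1,d} matches.
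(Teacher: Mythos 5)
Your skeleton is the right one, and it is essentially the paper's own route (Proposition~\ref{aiyevfiyvgiygviygviyg} combined with Remark~\ref{aygveuwactuetuc}): boundedness of the program $\Leftrightarrow$ ${\mathfrak c}\in{\rm cone}\{\row_i(A)\}$ by Farkas/Gordan, then pass to $n+1$ vectors and invoke Wendel. But the proposal stops exactly at the two points where the work is required, and the concluding paragraph merely restates the goal (``identify this event with LP boundedness via the cone/homogenization duality above'') --- the homogenization attempt was abandoned mid-stream and no identification was actually established. The first gap: you correctly observe that $\{{\bf 0}\in\conv\{X_1,\dots,X_n,-{\mathfrak c}\}\}=\{{\mathfrak c}\in{\rm cone}\{X_i\}\}\cup\{{\bf 0}\in\conv\{X_i\}\}$, and the extra event is \emph{not} contained in the first one (take $X_1=(1,0)$, $X_2=(-1,0)$, ${\mathfrak c}=(0,1)$: the program is unbounded, yet ${\bf 0}\in\conv\{X_1,X_2\}$), so the events genuinely differ and you never dispose of the discrepancy. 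The fix needs the general-position hypothesis: almost surely the origin lies in the affine span of no $d$ (or fewer) of the points, so by Carath\'eodory, on $\{{\bf 0}\in\conv\{X_i\}\}$ the origin is a.s.\ interior to a nondegenerate $d$--simplex spanned by $d+1$ of the points; hence ${\rm cone}\{X_i\}=\R^d\ni{\mathfrak c}$, and the two events agree up to a null set. Without this step the claimed identification is false as a statement about events.

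The second gap is the symmetrization: Wendel's theorem requires all $n+1$ vectors to be symmetrically distributed, and $-{\mathfrak c}$ is a fixed vector, so Theorem~\ref{akjfblfjhblfjwhbflqjwhbf} does not apply to $X_1,\dots,X_n,-{\mathfrak c}$ as written. You gesture at ``conditioning on a random sign'' because ``boundedness is a sign-symmetric event in an appropriate sense,'' but that is precisely what must be proved: with $s$ an independent symmetric sign one needs $\Prob\{{\mathfrak c}\in{\rm cone}\{X_i\}\}=\Prob\{-{\mathfrak c}\in{\rm cone}\{X_i\}\}$, which holds because replacing every $X_i$ by $-X_i$ preserves the joint law and maps one event to the other; only then is $\Prob\{\mbox{LP bounded}\}=\Prob\{s{\mathfrak c}\in{\rm cone}\{X_i\}\}$ with $-s{\mathfrak c}$ genuinely symmetric, and one must still check Wendel's independence hypothesis for $d$--tuples that include $-s{\mathfrak c}$. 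For comparison, the paper sidesteps the exact event identification altogether: in Proposition~\ref{aiyevfiyvgiygviygviyg} it sandwiches $\Prob\{\mbox{bounded}\}$ between $\Prob\{{\bf 0}\in{\rm int}\,\conv\{X_1,\dots,X_n,-s{\mathfrak c}\}\}$ (boundedness of the polyhedron $\{Ax\leq{\bf 1},\,\langle x,-s{\mathfrak c}\rangle\leq 1\}$, via polarity) and $\Prob\{{\bf 0}\in\conv\{X_1,\dots,X_n,-s{\mathfrak c}\}\}$, applies the one-sided bounds of Proposition~\ref{aiygveaiywcvywviey} which need no general position, and only then uses general position to kill the boundary term; that robustness is what lets the same proposition also drive the discrete case, Theorem~\ref{mainLP}.
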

As a variation of our main result, we get a matching estimate
for arbitrary symmetric distributions
with i.i.d components:
\begin{theorem}\label{mainLP}
For any $K\geq1$ there is $c>0$ depending only on $K$ with the following property.
Let $n\geq d$, and let $A$ be an $n\times d$ random matrix with
i.i.d symmetrically distributed entries of unit variance and subgaussian moment bounded above by $K$.
Then for any non-random non-zero cost vector $\mathfrak c$,
$$
\bigg|
\Prob\big\{\mbox{Linear program \eqref{ljhrbojvhbljhb} is bounded}\big\}
-1+2^{-n}\sum_{k=0}^{d-1}{n\choose k}
\bigg|\leq 2\exp(-cd).
$$
\end{theorem}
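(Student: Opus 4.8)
The plan is to reduce the statement to Theorem~\ref{main} applied with $n+1$ points in $\R^d$, after translating boundedness of the linear program into a statement about conic hulls. Write $a_1,\dots,a_n\in\R^d$ for the rows of $A$. Since the origin is always feasible (as $\langle a_i,0\rangle=0\le 1$), the program \eqref{ljhrbojvhbljhb} is feasible, and by LP duality (equivalently, Farkas' lemma) a feasible program of this form is bounded if and only if the cost vector lies in the conic hull of the constraint normals. Thus the boundedness event equals
\begin{equation*}
\{\mathfrak c\in\mathrm{cone}(a_1,\dots,a_n)\},\qquad \mathrm{cone}(a_1,\dots,a_n):=\Big\{\textstyle\sum_i \lambda_i a_i:\lambda_i\ge 0\Big\}.
\end{equation*}
This is the reduction underlying Corollary~\ref{afiyveyiyagiyge}; the task is to control the probability of this conic event for i.i.d.\ symmetric entries while allowing degeneracies.

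Next I would pass from conic hulls to convex hulls by homogenization. For any vectors $u_1,\dots,u_n$ and any $w$, one has $0\in\conv\{u_1,\dots,u_n,w\}$ if and only if $-w\in\mathrm{cone}(u_1,\dots,u_n)$ or $0\in\conv\{u_1,\dots,u_n\}$ (split a vanishing convex combination according to whether the weight on $w$ is positive or zero). Taking $w=-\mathfrak c$ gives
\begin{equation*}
\{0\in\conv\{a_1,\dots,a_n,-\mathfrak c\}\}=\{\mathfrak c\in\mathrm{cone}(a_1,\dots,a_n)\}\cup\{0\in\conv\{a_1,\dots,a_n\}\}.
\end{equation*}
Hence the convex-hull event exceeds the boundedness event only on $\{0\in\conv\{a_1,\dots,a_n\}\}\setminus\{\mathfrak c\in\mathrm{cone}(a_1,\dots,a_n)\}$; but whenever the origin lies in the \emph{interior} of $\conv\{a_1,\dots,a_n\}$ the conic hull is all of $\R^d$ and so contains $\mathfrak c$. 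Therefore this discrepancy is contained in $\{0\in\conv\{a_1,\dots,a_n\}\}\setminus\{0\in\mathrm{int}\,\conv\{a_1,\dots,a_n\}\}$, whose probability is at most $4\exp(-cd)$ by the two halves of Theorem~\ref{main} at parameters $n,d$. Consequently $\Prob\{\eqref{ljhrbojvhbljhb}\text{ bounded}\}$ equals $\Prob\{0\in\conv\{a_1,\dots,a_n,-\mathfrak c\}\}$ up to an additive $O(\exp(-cd))$, the constant being absorbed by slightly decreasing $c$.

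It then remains to prove the following core estimate: for every fixed nonzero $v\in\R^d$,
\begin{equation*}
\big|\Prob\{0\in\conv\{a_1,\dots,a_n,v\}\}-p_{n+1,d}\big|\le \exp(-cd).
\end{equation*}
This is exactly the conclusion of Theorem~\ref{main} at parameters $n+1,d$, save that one of the $n+1$ points is the deterministic vector $v=-\mathfrak c$ rather than a random symmetric one. I would establish it by rerunning the proof of Theorem~\ref{main}: condition on the absolute values of the entries of $a_1,\dots,a_n$ and average over the row-wise sign flips $\eta_1,\dots,\eta_n\in\{\pm1\}$, which preserve the joint distribution by symmetry. In general position the number of sign patterns with $0\in\conv\{\eta_1a_1,\dots,\eta_na_n,v\}$ equals the universal Wendel count $2^{n}-\sum_{k=0}^{d-1}\binom{n}{k}$, independent of $v$, which after dividing by $2^n$ yields precisely $p_{n+1,d}$ (this is the combinatorics behind Corollary~\ref{afiyveyiyagiyge}).

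The main obstacle is the one already confronted in Theorem~\ref{main}: for discrete distributions the configuration need not be in general position, so the sign-flip count can deviate from the universal value. The deviation is supported on sign patterns for which $v$, or the origin, lies on the boundary of a simplex spanned by a $d$-subset, i.e.\ on exact linear dependencies among $v$ and $(d-1)$-subsets of $\{a_i\}$, or among $d$-subsets of $\{a_i\}$. I expect these to be governed by the same anti-concentration (small-ball, least-common-denominator) estimates that drive Theorem~\ref{main}, now applied with the fixed direction $v$ carrying no randomness of its own; the unit-variance and subgaussian hypotheses should make the probability of any single such dependence small, and summing over the at most $\binom{n}{d}$ relevant subsets should leave a total of $\exp(-cd)$. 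The delicate point relative to the fully random setting is precisely that the degeneracy bounds cannot exploit randomness in the $(n+1)$-st point and must instead rely entirely on the $n$ genuinely random rows, which is where I would concentrate the technical effort.
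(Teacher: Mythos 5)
Your reduction of boundedness to the conic event and then, via homogenization, to the event $\{0\in\conv\{a_1,\dots,a_n,-\mathfrak c\}\}$ is correct, and it parallels what the paper does in Proposition~\ref{aiyevfiyvgiygviygviyg} (the paper instead randomizes the sign of $\mathfrak c$ so that the one-sided Wendel bounds of Proposition~\ref{aiygveaiywcvywviey} apply to $n+1$ symmetric vectors; your use of Theorem~\ref{main} at parameters $(n,d)$ accomplishes the same thing, modulo the edge case $n=d$ where Theorem~\ref{main} is not available). But this reduction is the easy part: everything hinges on your ``core estimate'' for the ensemble of $n$ random rows plus one deterministic point, which you correctly note is not covered by Theorem~\ref{main}, and your plan for proving it has a genuine quantitative gap.

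The gap: you propose to bound the deviation from the Wendel count by the probability of a single exact linear dependence, summed over ``the at most $\binom{n}{d}$ relevant subsets.'' This union bound fails in exactly the critical regime $n\approx 2d$. Take Rademacher entries: the probability that a fixed $d$-subset of rows is linearly dependent is at least $\Prob\{a_1=a_2\}=2^{-d}$, while the number of $d$-subsets is $\binom{2d}{d}\approx 4^d/\sqrt d$; the sum is of order $2^d$, i.e.\ the expected number of dependent $d$-subsets diverges, and no improvement of the single-subset anti-concentration estimate can repair this, since the $2^{-d}$ lower bound is unconditional. What saves the argument --- and what the paper identifies as the essence of its proof --- is that a dependence contributes to the deviation only if, in addition, the random sign pattern places the origin inside the hull of the dependent subset (conditional probability $2^{1-m}$ for a subset of size $m$, by sign-flip symmetry plus a uniqueness-of-kernel argument; this is Lemma~\ref{alkhjfbkefveutcfujefw}) \emph{and} makes that subset span a face of the whole polytope, forcing all remaining inner products $\langle y,a_j\rangle$ with the orthogonal direction $y$ to carry one sign (conditional probability roughly $2^{-(n-d)}$, which requires the LCD machinery of Theorem~\ref{akjakjnflkajnlkjnkjn} and Lemma~\ref{aljhvflajhfblajhbljh} both to show $\Prob\{\langle y,a_j\rangle=0\}$ is negligible and to decouple these events from the dependence itself). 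Only the product $\binom{n}{d}\cdot 2^{1-d}\cdot 2^{-(n-d)+1}\le 4$ survives the union bound; your sketch keeps the $\binom{n}{d}$ but drops the compensating $2^{-n}$-type factors, so the plan as stated does not close. The further complications the paper must handle --- non-unique kernels, compressible null vectors, the origin lying in the relative interior of a lower-dimensional face, hence the nested events $\Event_{J,I}$ in its proof --- all exist to make those two conditional factors rigorous, and none of them can be bypassed by the generic appeal to anti-concentration in your final paragraph.
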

\begin{Remark}
Among standard examples of non-Gaussian distributions satisfying the assumptions of Theorems~\ref{main}
and~\ref{mainLP} are
Rademacher and Bernoulli--Gaussian.
In particular, assume that the entries of $A$
are equidistributed with a Bernoulli--Gaussian variable of the form $b\,g$, where $b$ and $g$
are independent, $b\sim{\rm Bernoulli}(p)$, $g\sim N(0,1)$, and the parameter $p$ is a small
positive constant.
The corresponding optimization problem \eqref{ljhrbojvhbljhb}
can be viewed as a simplest randomized model of a linear program with a diluted coefficient matrix.
\end{Remark}

\subsection{Proof overview}\label{subs:overview}
Theorems~\ref{main} and~\ref{mainLP}
are closely related (see Subsection~\ref{oauebyfiufviqwviyv} for details).
At this point we discuss a ``direct'' proof of Theorem~\ref{main}
since it is less technical than the study of the random linear programs,
and at the same time conveys all the main ideas.

Let $X_1,\dots,X_n$ be i.i.d random vectors in $\R^d$ with i.i.d coordinates having symmetric distribution,
unit variance, and a bounded subgaussian moment.
An approximation argument (see Subsection~\ref{oauebyfiufviqwviyv}) shows that
in order to prove Theorem~\ref{main} it is sufficient to show that
$$
\Prob\big\{\mbox{$\conv\{X_1,\dots,X_{n}\}$ contains the origin on its boundary}\big\}
$$
is exponentially small in $d$.
Disregarding the scenario $\dim\conv\{X_1,\dots,X_{n}\}\leq d-1$
(the probability of that event is shown to be exponentially small in $d$ by applying standard results
of non-asymptotic random matrix theory), the problem is reduced to computing the probability that
${\bf 0}\in\conv\{X_{i_1},\dots,X_{i_d}\}$ for some 
$d$--tuple $X_{i_1},\dots,X_{i_d}$ of affinely independent vertices such that
$\conv\{X_{i_1},\dots,X_{i_d}\}$ lies on the boundary of the polytope (i.e is a facet or a subset
of a facet).
It is not difficult to check that conditioned on the event that ${\bf 0}$ is contained in the affine hyperlane
spanned by $X_{i_1},\dots,X_{i_d}$ and that {\it no proper subset of the vectors
are linearly dependent}, the [conditional] probability of $\big\{{\bf 0}\in\conv\{X_{i_1},\dots,X_{i_d}\}\big\}$
equals $2^{1-d}$. The assumption that $\conv\{X_{i_1},\dots,X_{i_d}\}$ is a subset of a facet also implies that for a vector $y$
orthogonal to $\conv\{X_{i_1},\dots,X_{i_d}\}$, all inner products $\langle y,X_j\rangle$, $j\neq i_1,\dots,i_d$,
are either all non-negative or all non-positive. In an ideal situation when $\Prob\{\langle X_j,y\rangle=0\}=0$,
this would supply an extra factor $2^{-n+d+1}$ to our probability estimate.
Overall, that would imply that for fixed $i_1,\dots,i_d$, the probability that 
$\conv\{X_{i_1},\dots,X_{i_d}\}$ (a) is $(d-1)$--dimensional, (b) is
a subset of a facet and (c) contains the origin, could be estimated from above by
$$
\exp(-c'd)\cdot 2^{1-d}\cdot 2^{-n+d+1},
$$
where the factor $\exp(-c'd)$ is an upper bound on the probability that the affine span of vectors
$X_{i_1},\dots,X_{i_d}$
contains the origin.
It is not difficult to see that
$$
{n\choose d}\,\exp(-c'd)\cdot 2^{1-d}\cdot 2^{-n+d+1}\leq 4\exp(-c'd)
$$
for arbitrary $n>d$, and hence the estimate survives crude union bound over
all unordered $d$--tuples of indices, producing the required result.
The above sketch ignores the fact that
degenerate cases (possible linear dependencies among $X_i$'s) may occur with a positive probability.
The essence of the proof of Theorem~\ref{main} is in carefully accounting for those degenerate cases
and making sure that they do not destroy the probability estimates.

\medskip

On a technical level, the proof builds on the machinery
of non-asymptotic random matrix theory,
with a few adaptations of standard results in that area.
To provide an illustration of the technical aspect,
we state a version of the main lemma used to prove Theorem~\ref{main}.
We refer to Lemma~\ref{alkhjfbkefveutcfujefw} is the text
for the general version of the below statement, necessary
to deal with random linear programs.
Recall that a unit vector $x$ in $\R^m$
is {\it $(\delta,\rho)$--incompressible} for some parameters $\delta,\rho\in(0,1)$
if there is no vector $y$ with at most $\lfloor \delta m\rfloor$
non-zero components
satisfying $\|x-y\|_2\leq \rho$ (see Section~\ref{sec:prelim}
for details).

\begin{lemma}
Fix parameters $K\geq 1$, $M>0$.
There are constants $C,c>0$ and $\rho,\delta\in(0,1)$
depending only on the subgaussian
moment $K$ with the following property.
Let $d\geq m\geq C$, let $\xi$ be a symmetric random variable of unit variance and subgaussian moment bounded
above by $K$, and 
let $X_1,\dots,X_{m}$ be i.i.d random vectors in $\R^d$ with
independent components equidistributed with $\xi$.
Denote by $W$ the $d\times m$
random matrix with columns $X_1,\dots,X_{m}$.
Then the event
\begin{align*}
\Big\{&\mbox{$W$ has rank $m-1$, and there is a $(\delta,\rho)$--incompressible unit vector}\\
&\mbox{$\lambda=(\lambda_1,\dots,\lambda_m)^\top$ with strictly positive components such that
$W\lambda={\bf 0}$}\Big\}
\end{align*}
has probability at most
$$
2^{1-m}
{d\choose m-1}\bigg(\big(C\exp(-c m)\big)^{d-m+1}
+\exp(-M\,m)\bigg).
$$
\end{lemma}

The above lemma can be viewed algebraically as a 
statement about the structure of the kernel
of random rectangular matrices.
For concreteness, let us focus on the case $d=m$.
We note that according to the classical result \cite{RV08},
the probability that the kernel of the matrix $W$ is non-trivial, 
is exponentially small in $m$, with an implicit constant in the power of exponent depending only on $K$. For our purposes, however,
it is crucial to have a much
stronger upper probability estimate of the form
$O\big(2^{-m}\, \exp(-cm)\big)$, with the ``extra'' factor $2^{-m}$
essentially due to the condition that the kernel has a non-empty
intersection with the positive orthant.

\medskip

{\bf Notation.} The spectral norm of a matrix $B$ will be denoted by $\|B\|$.
The unit Euclidean sphere in $\R^h$ is denoted by $S^{h-1}$.
Given a vector $x\in S^{h-1}$, we write $x\chi_{[h-1]}$ for the $(h-1)$--dimensional
vector obtained from $x$ by deleting its $h$--th component.

Throughout the note, for a set $S\subset\R^d$ the {\it interior} of $S$ is the set $S\setminus \partial S$
where $\partial S$ is the boundary of the set defined w.r.t the standard topology in $\R^d$.
Further, the {\it relative interior} of $S$ is the subset $S\setminus \partial S_H$
where $H$ is the affine linear span of $S$ and $\partial S_H$ is the [relative] boundary of $S$
defined w.r.t the standard topology in $H$.

\bigskip

{\bf Acknowledgments.} The work is partially supported by the NSF Grant DMS 2054666. The author would like to thank the Referee for valuable
suggestions on the first version of the paper.

\section{Preliminaries}\label{sec:prelim}

\subsection{Reductions}\label{oauebyfiufviqwviyv}

The first step in proving Theorems~\ref{main} and~\ref{mainLP} is the observation that an approximation argument,
together with Wendel's formula, yield the following one-sided estimates.
\begin{prop}\label{aiygveaiywcvywviey}
Let $n>d$ and let
$Y_1,\dots,Y_{n}$ be independent random vectors in $\R^d$ having symmetric distributions.
Then
\begin{align*}
\Prob&\big\{\mbox{$\conv\{Y_1,\dots,Y_{n}\}$ is $d$--dimensional and contains the origin in its interior}\big\}\\
&\leq 1-2^{-n+1}\sum_{k=0}^{d-1}{n-1\choose k},
\end{align*}
and
\begin{align*}
\Prob\big\{\mbox{$\conv\{Y_1,\dots,Y_{n}\}$ contains the origin}\big\}
\geq 1-2^{-n+1}\sum_{k=0}^{d-1}{n-1\choose k}.
\end{align*}
\end{prop}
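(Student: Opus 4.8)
The plan is to prove Proposition~\ref{aiygveaiywcvywviey} by an approximation argument that reduces everything to Wendel's Theorem~\ref{akjfblfjhblfjwhbflqjwhbf}. The difficulty is that Wendel's theorem requires that with probability one every $d$-tuple of vectors is linearly independent (general position), whereas the $Y_i$ here may have discrete distributions for which this fails. The key idea is to perturb the $Y_i$ by a small independent continuous noise so that the perturbed vectors satisfy Wendel's hypotheses exactly, apply Wendel's formula to the perturbed family, and then pass to the limit. Crucially, perturbation can only be applied cleanly in one direction for each of the two inequalities: a small perturbation can destroy the property of containing the origin on the boundary, but the behavior of the open and closed containment events under small perturbations goes in opposite directions, which is exactly what produces the two one-sided bounds.

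Let me set up the perturbation. Fix $\varepsilon>0$ and let $G_1,\dots,G_n$ be independent standard Gaussian vectors in $\R^d$, independent of the $Y_i$, and set $Y_i^\varepsilon:=Y_i+\varepsilon G_i$. Since each $Y_i$ is symmetric and each $G_i$ is symmetric and independent, each $Y_i^\varepsilon$ is symmetrically distributed. Moreover, for any fixed $(d-1)$-dimensional subspace the probability that $\varepsilon G_i$ (hence $Y_i^\varepsilon$) lands in any fixed hyperplane is zero, and by a standard conditioning argument this yields that almost surely every $d$-tuple of the $Y_i^\varepsilon$ is linearly independent. Thus Wendel's theorem applies to $Y_1^\varepsilon,\dots,Y_n^\varepsilon$ and gives, for every $\varepsilon>0$,
\begin{equation}\label{eq:wendelperturbed}
\Prob\big\{\conv\{Y_1^\varepsilon,\dots,Y_n^\varepsilon\}\mbox{ contains the origin in its interior}\big\}=p_{n,d}=1-2^{-n+1}\sum_{k=0}^{d-1}\binom{n-1}{k}.
\end{equation}

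For the first inequality, I would argue that containing the origin in the interior is, up to a null event, an open condition that is stable under small perturbations. Precisely, on the event that $\conv\{Y_1,\dots,Y_n\}$ is $d$-dimensional and contains the origin in its interior, there is a (random) positive distance from the origin to the boundary, so for all sufficiently small $\varepsilon$ the perturbed hull $\conv\{Y_1^\varepsilon,\dots,Y_n^\varepsilon\}$ also contains the origin in its interior. Letting $\Event$ denote the event in question, this means $\mathbf 1_{\Event}\le\liminf_{\varepsilon\to 0}\mathbf 1_{\{\conv\{Y_i^\varepsilon\}\ni \mathbf 0\text{ in interior}\}}$ almost surely, so by Fatou's lemma and \eqref{eq:wendelperturbed}, $\Prob(\Event)\le\liminf_{\varepsilon\to 0}p_{n,d}=p_{n,d}$, which is the first claimed bound. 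For the second inequality I would dually use that closed containment is a closed condition: if $\mathbf 0\in\conv\{Y_i^\varepsilon\}$ for a sequence $\varepsilon\to 0$ then, since $Y_i^\varepsilon\to Y_i$ and the convex hull varies continuously, $\mathbf 0\in\conv\{Y_i\}$. Hence $\mathbf 1_{\{\mathbf 0\in\conv\{Y_i\}\}}\ge\limsup_{\varepsilon\to 0}\mathbf 1_{\{\mathbf 0\in\conv\{Y_i^\varepsilon\}\text{ in interior}\}}$ pointwise, and applying reverse Fatou (the indicators are bounded by $1$) together with \eqref{eq:wendelperturbed} gives $\Prob\{\mathbf 0\in\conv\{Y_i\}\}\ge p_{n,d}$, the second claim.

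The main obstacle I anticipate is justifying the almost-sure semicontinuity statements cleanly, especially handling the measure-zero exceptional configurations (for instance, when the unperturbed hull is lower-dimensional, or the origin sits exactly on a facet). The cleanest route is to verify: (i) the map $\varepsilon\mapsto\conv\{Y_i^\varepsilon\}$ converges to $\conv\{Y_i\}$ in Hausdorff distance as $\varepsilon\to 0$ almost surely, which is immediate since each vertex converges; and (ii) a deterministic convex-geometry lemma stating that interior containment is preserved under small Hausdorff perturbations while closed containment is a limit-closed property. Both are elementary but must be stated carefully so that the Fatou and reverse-Fatou applications are rigorous. Once these are in place, \eqref{eq:wendelperturbed} does all the arithmetic work and the two one-sided bounds follow as above.
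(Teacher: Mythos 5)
Your proof is correct and takes essentially the same route as the paper: the paper establishes this proposition by exactly such an approximation argument combined with Wendel's formula, deferring the details to \cite[Proposition~2.12]{KVZGAFA}. Your Gaussian perturbation, the verification of general position, the stability of interior containment under small perturbations of the vertices, the limit-closedness of closed containment, and the two Fatou applications supply precisely the details that the paper outsources to that citation.
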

We refer to \cite[Proposition~2.12]{KVZGAFA} for a proof of a related statement
which, with minor changes, also verifies Proposition~\ref{aiygveaiywcvywviey}.

As a corollary of the proposition, we get
\begin{prop}\label{aiyevfiyvgiygviygviyg}
Let $n\geq d$, and let $A$ be an $n\times d$ random matrix with independent 
symmetrically distributed rows $X_1,\dots,X_n$.
Then for any non-zero cost vector ${\mathfrak c}$,
\begin{align*}
\bigg|&\Prob\big\{\mbox{Linear program \eqref{ljhrbojvhbljhb} is bounded}\big\}
-1+2^{-n}\sum_{k=0}^{d-1}{n\choose k}
\bigg|\\
&\leq
\Prob\big\{\conv\{X_1,\dots,X_n,{\mathfrak c}\}\;\mbox{contains the origin on its boundary}\big\}.
\end{align*}
\end{prop}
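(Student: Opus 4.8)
The plan is to connect the boundedness of the linear program \eqref{ljhrbojvhbljhb} to an origin-containment statement for the augmented point set $\{X_1,\dots,X_n,{\mathfrak c}\}$, and then read off the claimed inequality as a direct application of Proposition~\ref{aiygveaiywcvywviey} to the $n+1$ vectors $X_1,\dots,X_n,{\mathfrak c}$ in $\R^d$. The key combinatorial observation is an LP duality / polarity dictionary: the program ``$\max\langle x,{\mathfrak c}\rangle$ subject to $Ax\le{\bf 1}$'' is \emph{unbounded} precisely when there is a recession direction $x$ with $\langle x,{\mathfrak c}\rangle>0$ and $\langle X_i,x\rangle\le 0$ for all $i$; that is, unboundedness is equivalent to the existence of a nonzero $x$ separating $\mathfrak c$ (positively) from all the $X_i$ (non-positively), which by Farkas' lemma is equivalent to ${\mathfrak c}\notin\mathrm{cone}\{X_1,\dots,X_n\}$. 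The first step is therefore to prove cleanly that boundedness of \eqref{ljhrbojvhbljhb} coincides (up to a boundary/feasibility discrepancy event) with ${\mathfrak c}\in\mathrm{cone}\{X_1,\dots,X_n\}$.

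Next I would translate the conic-membership statement into an origin-containment statement for the \emph{symmetrized} family. Because each $X_i$ is symmetrically distributed, the distribution of $\{X_1,\dots,X_n\}$ is invariant under independent sign flips, and the event ${\mathfrak c}\in\mathrm{cone}\{X_1,\dots,X_n\}$ has the same probability as ${\bf 0}\in\conv\{X_1,\dots,X_n,{\mathfrak c}\}$ after an appropriate homogenization/sign argument: adding the fixed extra vector $\mathfrak c$ and asking for the origin in the convex hull encodes exactly the requirement that $\mathfrak c$ lies in the nonnegative combination of the remaining points. Here I must be careful that the extra point $\mathfrak c$ is \emph{non-random} and non-symmetric, so I cannot symmetrize it; the correct route is to condition on the signs of the $X_i$ only, or to argue directly with cones, so that the probability of ``LP bounded'' is sandwiched by the probabilities appearing in Proposition~\ref{aiygveaiywcvywviey} applied to the $n+1$-point configuration $\{X_1,\dots,X_n,{\mathfrak c}\}$.

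Once this equivalence is in place, the proposition becomes a two-sided comparison: the lower estimate from Proposition~\ref{aiygveaiywcvywviey} gives one inequality for the containment probability, and the interior-containment upper estimate gives the other, with both bounds centered on $p_{n+1,d}=1-2^{-n}\sum_{k=0}^{d-1}{n\choose k}$. The discrepancy between the two sides of the bound in Proposition~\ref{aiygveaiywcvywviey} is exactly the probability that $\conv\{X_1,\dots,X_n,{\mathfrak c}\}$ contains the origin on its boundary (rather than strictly in the interior or not at all), and likewise the gap between ``LP bounded'' and the clean conic-membership event is carried entirely by boundary/degenerate configurations. Thus
\begin{align*}
\bigg|\Prob\big\{\text{\eqref{ljhrbojvhbljhb} bounded}\big\}-1+2^{-n}\sum_{k=0}^{d-1}{n\choose k}\bigg|
\le\Prob\big\{{\bf 0}\in\partial\conv\{X_1,\dots,X_n,{\mathfrak c}\}\big\},
\end{align*}
which is the asserted inequality.

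The main obstacle I anticipate is the careful bookkeeping in the first and second steps: the LP can fail to be bounded for two distinct reasons (a genuine recession direction, or infeasibility of the primal), and the equivalence with conic membership must handle feasibility, strictness, and the boundary cases simultaneously. In particular, tracking which degenerate events (the origin or $\mathfrak c$ lying on a lower-dimensional face, or the point set spanning a proper subspace) get absorbed into the boundary-containment term on the right-hand side—rather than leaking into the two-sided estimate—is the delicate part; everything else reduces to invoking Proposition~\ref{aiygveaiywcvywviey} with the $n{+}1$ vectors and subtracting the two one-sided bounds.
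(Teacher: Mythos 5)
Your overall architecture coincides with the paper's: sandwich the event $\{\mbox{LP bounded}\}$ between $\{{\bf 0}\in \mbox{interior of }\conv\}$ and $\{{\bf 0}\in\conv\}$ for an augmented $(n+1)$--point configuration, invoke Proposition~\ref{aiygveaiywcvywviey}, and charge the two-sided discrepancy to the boundary event. The Farkas/recession-direction step and the identification of the discrepancy with boundary configurations are sound. However, there is a genuine gap exactly at the point you flag and then leave unresolved: Proposition~\ref{aiygveaiywcvywviey} requires \emph{all} of the vectors to be symmetrically distributed, and the configuration $\{X_1,\dots,X_n,{\mathfrak c}\}$ (equivalently $\{X_1,\dots,X_n,-{\mathfrak c}\}$) violates this hypothesis, since ${\mathfrak c}$ is deterministic and non-zero. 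Your two proposed substitutes do not close this. ``Arguing directly with cones'' only reproduces the sandwich; it does not produce the value $p_{n+1,d}$. ``Conditioning on the signs of the $X_i$ only'' is no longer an application of Proposition~\ref{aiygveaiywcvywviey} at all: it would require proving from scratch a Wendel-type chamber-counting bound for $n$ symmetric vectors plus one \emph{fixed} vector (e.g., that the number of sign patterns $\epsilon\in\{-1,1\}^n$ admitting a witness $u$ with $\epsilon_i\langle X_i,u\rangle<0$ for all $i$ and $\langle {\mathfrak c},u\rangle<0$ is at most $\sum_{k=0}^{d-1}{n\choose k}$), a nontrivial statement you neither prove nor can cite from the paper. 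As written, your plan ends by applying the proposition to a configuration for which its hypothesis fails.

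The missing idea is that one \emph{can} symmetrize the cost vector, contrary to your assertion: let $s$ be a symmetric $\pm1$ random sign independent of $A$ and work with the cost $s\,{\mathfrak c}$. Because the rows are independent and symmetric, $-A$ has the same distribution as $A$; hence the LP with cost $s{\mathfrak c}$ is bounded with exactly the same probability as \eqref{ljhrbojvhbljhb}, and likewise $\Prob\{{\bf 0}\in\partial\conv\{X_1,\dots,X_n,-s{\mathfrak c}\}\}=\Prob\{{\bf 0}\in\partial\conv\{X_1,\dots,X_n,{\mathfrak c}\}\}$ (flip the signs of all $X_i$ on the event $\{s=1\}$). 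Now the $n+1$ vectors $X_1,\dots,X_n,-s{\mathfrak c}$ are all symmetrically distributed, Proposition~\ref{aiygveaiywcvywviey} applies verbatim, and your sandwich --- boundedness of the polyhedron $\{x:\,Ax\leq{\bf 1},\ \langle x,-s{\mathfrak c}\rangle\leq 1\}$ implies boundedness of the LP, which in turn implies ${\bf 0}\in\conv\{X_1,\dots,X_n,-s{\mathfrak c}\}$ --- closes the proof. Without this step (or a full asymmetric Wendel argument replacing it), the proposal does not compile into a proof.
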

\begin{proof}
Let $s$ be a symmetric sign variable independent from $A$.
Observe that, in view of symmetrical distributions of $X_i$'s,
the linear program
\begin{equation}\label{ahgvfkihgfikchgwckh}
\max\langle x,s\,{\mathfrak c}\rangle\quad\mbox{subject to }Ax\leq {\bf 1},
\end{equation}
is bounded with the same probability as linear program \eqref{ljhrbojvhbljhb}, and
\begin{align*}
&\Prob\big\{\conv\{X_1,\dots,X_n,-s{\mathfrak c}\}\;\mbox{contains the origin on its boundary}\big\}\\
=\,&\Prob\big\{\conv\{X_1,\dots,X_n,{\mathfrak c}\}\;\mbox{contains the origin on its boundary}\big\}.
\end{align*}
Note that
\begin{align*}
\Prob\big\{\mbox{\eqref{ahgvfkihgfikchgwckh} is bounded}\big\}&\geq 
\Prob\big\{\mbox{Polyhedron $\{x\in\R^d:\;Ax\leq{\bf 1},\;\langle x,-s\,{\mathfrak c}\rangle\leq 1\}$ is bounded}\big\}\\
&=\Prob\big\{\conv\{X_1,\dots,X_n,-s\,{\mathfrak c}\}\;\mbox{contains the origin in the interior}\big\},
\end{align*}
and that
$$
\Prob\big\{\mbox{\eqref{ahgvfkihgfikchgwckh} is bounded}\big\}
\leq \Prob\big\{\conv\{X_1,\dots,X_n,-s\,{\mathfrak c}\}\;\mbox{contains the origin}\big\}.
$$
Applying Proposition~\ref{aiygveaiywcvywviey}, we get 
\begin{align*}
\Prob&\big\{\mbox{\eqref{ahgvfkihgfikchgwckh} is bounded}\big\}\\ 
&\geq
1-2^{-n}\sum_{k=0}^{d-1}{n\choose k}
-\Prob\big\{\conv\{X_1,\dots,X_n,-s\,{\mathfrak c}\}\;\mbox{contains the origin on its boundary}\big\},
\end{align*}
and, similarly,
\begin{align*}
\Prob\big\{\mbox{\eqref{ahgvfkihgfikchgwckh} is bounded}\big\}
&\leq
1-2^{-n}\sum_{k=0}^{d-1}{n\choose k}\\
&+
\Prob\big\{\conv\{X_1,\dots,X_n,-s\,{\mathfrak c}\}\;\mbox{contains the origin on its boundary}\big\}.
\end{align*}
The result follows.
\end{proof}
\begin{Remark}\label{aygveuwactuetuc}
Under the assumptions $\Prob\{X_i\in H\}=0$, $1\leq i\leq n$ for every fixed hyperplane $H$,
the last proposition implies that for any non-zero cost vector ${\mathfrak c}$,
$$
\Prob\big\{\mbox{Linear program \eqref{ljhrbojvhbljhb} is bounded}\big\}
=1-2^{-n}\sum_{k=0}^{d-1}{n\choose k},
$$
which yields Corollary~\ref{afiyveyiyagiyge}.
\end{Remark}
\begin{Remark}\label{aouvbowiuhbigiygvyrdx}
Assume that for some $n, d$, and for $X_1,\dots,X_n$ as in Proposition~\ref{aiyevfiyvgiygviygviyg},
we are able to show that
$$
\Prob\big\{\conv\{X_1,\dots,X_n,{\mathfrak c}\}\;\mbox{contains the origin on its boundary}\big\}
\leq 2\exp(-cd)
$$
for $c>0$ depending only on $K$.
In view of Proposition~\ref{aiyevfiyvgiygviygviyg}, this would imply the statement of Theorem~\ref{mainLP} for those $n,d$.
Similarly, in view of Proposition~\ref{aiygveaiywcvywviey},
proving that the probability of the event 
$\big\{\conv\{X_1,\dots,X_n\}\;\mbox{contains the origin on its boundary}\big\}$ is exponentially small in $d$,
would imply Theorem~\ref{main} for those $n,d$.
\end{Remark}

\subsection{Subgaussian variables}

\begin{defi}
Let $K>0$.
A random variable $\xi$ is {\it $K$--subgaussian} if
$$
\Exp\big(\xi^2/K^2\big)\leq 2.
$$
\end{defi}

\begin{lemma}[{Norms of subgaussian matrices;
see, for example, \cite[Section~4.4]{VershyninBook}}]\label{jayucjuatrutqcwquj}
For every $K\geq 1$ and $R>0$ there is $C_{\text{\tiny\ref{jayucjuatrutqcwquj}}}>0$
depending only on $K$ and $R$ with the following property.
Let $A$ be an $N\times n$ random matrix with i.i.d entries of zero mean, unit variance, and with
subgaussian moment bounded above by $K$.
Then
$$
\Prob\big\{\|A\|\geq C_{\text{\tiny\ref{jayucjuatrutqcwquj}}}\sqrt{\max(n,N)}\big\}\leq \exp\big(-R\max(n,N)\big).
$$
\end{lemma}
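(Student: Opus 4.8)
The plan is to reduce the operator norm to a finite maximum over $\varepsilon$-nets and then apply a subgaussian tail bound for a single bilinear form. Write $m:=\max(n,N)$ and recall that
$$
\|A\|=\sup_{x\in S^{n-1}}\sup_{y\in S^{N-1}}\langle Ax,y\rangle.
$$
First I would fix a $\tfrac14$-net $\Net_x$ of $S^{n-1}$ and a $\tfrac14$-net $\Net_y$ of $S^{N-1}$; by a standard volumetric estimate these can be chosen with $|\Net_x|\leq 9^n$ and $|\Net_y|\leq 9^N$. A routine approximation lemma for the spectral norm then yields
$$
\|A\|\leq 2\max_{x\in\Net_x,\,y\in\Net_y}\langle Ax,y\rangle,
$$
so it suffices to control the bilinear form uniformly over the finite product net.

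Next, for fixed $x\in S^{n-1}$ and $y\in S^{N-1}$, observe that
$$
\langle Ax,y\rangle=\sum_{i=1}^{N}\sum_{j=1}^{n}A_{ij}\,x_j\,y_i
$$
is a linear combination of the i.i.d.\ centered entries $A_{ij}$ with coefficients $x_jy_i$ satisfying $\sum_{i,j}(x_jy_i)^2=|x|^2|y|^2=1$. Since a sum of independent centered $K$-subgaussian variables is again subgaussian with parameter proportional to the $\ell^2$-norm of the coefficient vector, $\langle Ax,y\rangle$ is subgaussian with parameter $O(K)$, giving a tail bound
$$
\Prob\big\{\langle Ax,y\rangle\geq t\big\}\leq 2\exp(-c_0t^2/K^2)
$$
for an absolute constant $c_0>0$.

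Finally I would take a union bound over the product net. Since $|\Net_x|\,|\Net_y|\leq 9^{n+N}\leq 81^{m}$, combining the two previous displays with $t=\tfrac12 C\sqrt{m}$ gives
$$
\Prob\big\{\|A\|\geq C\sqrt{m}\big\}\leq 81^{m}\cdot 2\exp\big(-c_0C^2m/(4K^2)\big)=2\exp\big(m\log 81-c_0C^2m/(4K^2)\big).
$$
It then remains only to choose $C=C_{\text{\tiny\ref{jayucjuatrutqcwquj}}}$ large enough, depending on $K$ and $R$, so that $c_0C^2/(4K^2)\geq R+\log 81$; this absorbs the entropy of the net and delivers the asserted bound $\exp(-Rm)$.

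Since the result is completely standard (which is why the paper quotes it rather than proving it), there is no substantial obstacle. The only point deserving care is the transition from the Orlicz-type definition $\Exp\exp(\xi^2/K^2)\leq 2$ to the concentration estimate for $\langle Ax,y\rangle$: one passes from this definition to a moment-generating-function bound, and then exploits that the m.g.f.\ of a sum of independent variables factorizes, so that the effective subgaussian parameter of $\langle Ax,y\rangle$ scales with $\big(\sum_{i,j}(x_jy_i)^2\big)^{1/2}=1$ rather than with the number of summands. This is precisely what confines the dimensional dependence to the net cardinalities and lets the quadratic gain in $t$ overwhelm the exponential entropy factor.
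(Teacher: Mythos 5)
Your proposal is correct and follows essentially the same route as the proof the paper points to: Lemma~\ref{jayucjuatrutqcwquj} is quoted from \cite[Section~4.4]{VershyninBook}, where the argument is exactly this two-sided $\varepsilon$-net discretization, the subgaussian Hoeffding bound for $\langle Ax,y\rangle$ with unit coefficient vector, and a union bound absorbed by choosing $C$ large. The only cosmetic point is that your final choice $c_0C^2/(4K^2)\geq R+\log 81$ leaves a bound of $2\exp(-Rm)$ rather than $\exp(-Rm)$; taking $C$ marginally larger (say $c_0C^2/(4K^2)\geq R+\log 81+1$) absorbs the factor $2$.
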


\subsection{Sparse and compressible vectors}\label{subs:sparsecomp}

\begin{defi}[Sparse vectors]
Given $m\leq h$, a vector $y\in \R^h$ is $m$--sparse if it has at most $m$ non-zero components.
\end{defi}

\begin{defi}[Compressible vectors \cite{LPRT05,RV08}]
Given parameters $\delta,\rho>0$, define $\Comp_h(\delta,\rho)$
as the set of all unit vectors $y$ in $\R^h$ such that the Euclidean distance of $y$ to the set of $\delta h$--sparse
vectors is at most $\rho$.
The vectors from $\Comp_h(\delta,\rho)$ are called $(\delta,\rho)$--compressible.
\end{defi}

\begin{lemma}\label{aljhfakfjhebfkajhbfkahvaghf}
For every $K,R\geq 1$ there are $c_{\text{\tiny\ref{aljhfakfjhebfkajhbfkahvaghf}}},
\beta_{\text{\tiny\ref{aljhfakfjhebfkajhbfkahvaghf}}},\upsilon_{\text{\tiny\ref{aljhfakfjhebfkajhbfkahvaghf}}}>0$
depending only on $K,R$ with the following property.
Let $n,d\geq 1$ satisfy $d\leq Rn$, and let
$B=(B_{ij})$ be a $d\times (n+1)$ matrix,
where the entries $B_{ij}$, $1\leq i\leq d$, $1\leq j\leq n$ are
i.i.d of zero mean, unit variance, and with
subgaussian moment bounded above by $K$,
and the $(n+1)$--st column of $B$ is non-random of Euclidean norm $\sqrt{d}$.
Then with probability at least $1-2\exp(-c_{\text{\tiny\ref{aljhfakfjhebfkajhbfkahvaghf}}}d)$, we have
$$
Bx\neq {\bf 0}\quad\mbox{ for every
$(\beta_{\text{\tiny\ref{aljhfakfjhebfkajhbfkahvaghf}}},\upsilon_{\text{\tiny\ref{aljhfakfjhebfkajhbfkahvaghf}}})$--compressible
vector $x$.}
$$
\end{lemma}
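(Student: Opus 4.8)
The plan is to prove that $B$ is quantitatively injective on the compressible vectors through the standard three-ingredient scheme: a single-vector small-ball estimate, a net over the low-complexity set of sparse directions, and an operator-norm bound to transfer from the net to the whole set. Write $x=(x',x_{n+1})$ with $x'\in\R^{n}$, denote by $B'$ the random $d\times n$ block of $B$ and by $v$ its deterministic last column (so $\|v\|=\sqrt d$), and note that $Bx=B'x'+x_{n+1}v$ has coordinates $(Bx)_i=\langle\row_i(B'),x'\rangle+x_{n+1}v_i$, $i=1,\dots,d$, which are independent across $i$. Throughout I condition on the event $\{\|B'\|\le C\sqrt{\max(n,d)}\}$, which by Lemma~\ref{jayucjuatrutqcwquj} fails with probability at most $e^{-d}$, and on which $\|B\|\le\|B'\|+\sqrt d\le C'\sqrt{\max(n,d)}$.

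The core estimate is a small-ball bound for a fixed unit vector $y=(y',y_{n+1})$ whose block $\|y'\|$ is not too small. Each coordinate $W_i:=\langle\row_i(B'),y'\rangle$ is a weighted sum of i.i.d.\ symmetric unit-variance entries, hence has mean zero, variance $\|y'\|^2$, and—since subgaussianity yields $\Exp\xi^4\le C_K$—fourth moment at most $C_K\|y'\|^4$. A Paley--Zygmund/second-moment computation then bounds the L\'evy concentration function
$$
\sup_{c\in\R}\Prob\big\{|W_i-c|\le\tfrac12\|y'\|\big\}\le 1-\delta,\qquad \delta=\delta(K)>0,
$$
and, crucially, the supremum over centers $c$ makes this bound insensitive to the deterministic shift $y_{n+1}v_i$; thus $\Prob\{|(By)_i|\le\tfrac12\|y'\|\}\le 1-\delta$ for every $i$. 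Since the coordinates are independent, a Chernoff bound shows that at least $\delta d/2$ of them exceed $\tfrac12\|y'\|$ except with probability $\exp(-c_0 d)$, giving $\|By\|\ge\tfrac12\|y'\|\sqrt{\delta d/2}$. This is useful only when $\|y'\|$ is bounded below, so I split the compressible set at a threshold $\upsilon_1$: when $\|x'\|\le\upsilon_1$ one has $|x_{n+1}|\ge\tfrac12$ and the reverse triangle inequality gives, deterministically on the conditioning event, $\|Bx\|\ge|x_{n+1}|\,\|v\|-\|B'\|\,\|x'\|\ge\tfrac12\sqrt d-C\sqrt{\max(n,d)}\,\upsilon_1>0$ once $\upsilon_1$ is small; the remaining vectors with $\|x'\|>\upsilon_1$ are handled by the small-ball estimate.

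To upgrade the single-vector bound to all compressible vectors with $\|x'\|>\upsilon_1$, I would take an $\epsilon$-net $\Net$ of the set of $\beta(n+1)$-sparse unit vectors, of cardinality $|\Net|\le\binom{n+1}{\beta(n+1)}(3/\epsilon)^{\beta(n+1)}\le\exp\big(C\beta(n+1)\log(1/(\beta\epsilon))\big)$. Every $(\beta,\upsilon)$-compressible vector lies within $\upsilon+\epsilon$ of a net point, and those relevant here have representatives with $\|y'\|\ge\upsilon_1/2$. A union bound over $\Net$ of the event $\{\|By\|\le\tfrac14\upsilon_1\sqrt{\delta d/2}\}$ costs $|\Net|\exp(-c_0 d)$; choosing $\beta$ small enough in terms of $K$ and $R$ that $\beta(n+1)\log(1/(\beta\epsilon))\le\tfrac12 c_0 d$—this is where the hypothesis relating $n$ and $d$ is used, keeping $\beta(n+1)$ of order $d$ so the net is subexponential in $d$—makes this at most $\exp(-\tfrac12 c_0 d)$. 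Finally, for a compressible $x$ with representative $y\in\Net$, $\|Bx\|\ge\|By\|-\|B\|\,\|x-y\|\ge\tfrac14\upsilon_1\sqrt{\delta d/2}-C'\sqrt{\max(n,d)}\,(\upsilon+\epsilon)$, which is positive once $\upsilon,\epsilon$ are small. Combining the two cases, on an event of probability $1-2\exp(-cd)$ we obtain $Bx\neq{\bf 0}$ for all $(\beta,\upsilon)$-compressible $x$.

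The main obstacle is the small-ball step, for two intertwined reasons: the distribution of $\xi$ may be discrete, so one cannot invoke a density, and the fixed column contributes an arbitrary, possibly large, deterministic shift $y_{n+1}v_i$ to each coordinate. Both are resolved simultaneously by controlling the full L\'evy concentration function $\sup_c\Prob\{|W_i-c|\le\lambda\}$ through the variance and the bounded fourth moment rather than through any pointwise density, the bounded fourth moment being a consequence of the subgaussian assumption. A secondary point requiring care is the order in which $\upsilon_1,\beta,\upsilon,\epsilon$ are chosen, so that the deterministic bound survives, the net stays subexponential in $d$, and the net-to-compressible transfer closes; once the single-vector small-ball bound is in place, these steps are routine.
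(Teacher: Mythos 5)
Your proposal is correct and is, in essence, the very argument the paper invokes: the paper gives no self-contained proof of Lemma~\ref{aljhfakfjhebfkajhbfkahvaghf}, only the remark that it is a ``standard application of the $\varepsilon$--net argument'' with pointers to \cite[Lemma~3.3]{RV08} and \cite[Section~4.6]{VershyninBook}, and your three ingredients --- a single-vector small-ball bound via Paley--Zygmund applied to the L\'evy concentration function, a net over sparse directions, and the operator-norm bound of Lemma~\ref{jayucjuatrutqcwquj} to pass from the net to all compressible vectors --- are exactly that standard scheme. Your two adaptations for the one non-standard feature (the non-random last column) are also the right ones: taking the supremum over centers in the concentration function so that the deterministic shift $x_{n+1}v_i$ is absorbed for free, and splitting off the case $\|x'\|\le \upsilon_1$, where the fixed column of norm $\sqrt d$ dominates and the conclusion is deterministic on the norm event. (Minor note: you call the entries symmetric, which the lemma does not assume, but your symmetrization/fourth-moment route to the concentration bound never uses symmetry, so nothing is lost.)

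One point deserves explicit flagging. Your net-size computation (``keeping $\beta(n+1)$ of order $d$'') and your Case-1 estimate $\tfrac12\sqrt d-C\sqrt{\max(n,d)}\,\upsilon_1>0$ both require $n\le Rd$, whereas the lemma as printed assumes $d\le Rn$. Under the literal hypothesis the statement is in fact false: if $n\gg d$, any $d+1$ of the random columns are linearly dependent, so the kernel of $B$ contains a nonzero $(d+1)$--sparse vector, and as soon as $\beta(n+1)\ge d+1$ this vector is $(\beta,\upsilon)$--compressible, giving a compressible null vector deterministically. So the printed condition is evidently a typo for $n\le Rd$ --- consistent with how the lemma is applied in the paper, where $R=3$ and $1.5d\le n\le 3d$ --- and your implicit reading is the correct one; you should simply state that assumption explicitly at the start of your proof.
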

\begin{Remark}
The proof of the above lemma is a standard application of the $\varepsilon$--net argument; see, for example,
proof of \cite[Lemma~3.3]{RV08}, as well as \cite[Section~4.6]{VershyninBook}.
The lemma will be applied with $B=[A^\top, \mathfrak c]$, where $A$ is the coefficient matrix of the linear
program \eqref{ljhrbojvhbljhb}, and $\mathfrak c$ is an appropriately normalized cost vector.
\end{Remark}

\begin{lemma}\label{adslfknaflknkjnkjn}
For every $K\geq 1$ there are $c_{\text{\tiny\ref{adslfknaflknkjnkjn}}},
\delta_{\text{\tiny\ref{adslfknaflknkjnkjn}}},\rho_{\text{\tiny\ref{adslfknaflknkjnkjn}}}>0$
depending only on $K$ with the following property.
Let $\xi$ be a symmetric random variable of unit variance with the subgaussian moment bounded above by $K$.
Let $n\geq d\geq 1$, and let $A$ be an $n\times d$ random matrix with i.i.d
entries equidistributed with $\xi$. Then
\begin{align*}
\Prob\big\{\mbox{There is $y\in \Comp_d(\delta_{\text{\tiny\ref{adslfknaflknkjnkjn}}},\rho_{\text{\tiny\ref{adslfknaflknkjnkjn}}})$
such that $\langle \row_i(A),y\rangle\leq 0$ for all $i\leq n$}\big\}
\leq 2\exp\big(-c_{\text{\tiny\ref{adslfknaflknkjnkjn}}}n\big).
\end{align*}
\end{lemma}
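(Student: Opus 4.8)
The plan is to run an $\varepsilon$--net argument, but before doing so to soften the hard half--space constraint into a functional that lives at the correct scale. Set
$$
f(y):=\sum_{i=1}^n\big(\langle\row_i(A),y\rangle\big)_+,\qquad (t)_+:=\max(t,0),
$$
and note that the event in the lemma is contained in $\{\exists\,y\in\Comp_d(\delta,\rho):\,f(y)=0\}$. The point of passing to $f$ is twofold. On one hand, for a fixed unit vector $y$ the quantity $f(y)$ is a sum of $n$ i.i.d.\ nonnegative terms whose common mean is a positive constant, so it concentrates around a multiple of $n$. On the other hand $f$ is convex and, since $|(s)_+-(t)_+|\le|s-t|$, it is Lipschitz with constant at most $\sqrt n\,\|A\|$; on the event $\{\|A\|\le C\sqrt n\}$ supplied by Lemma~\ref{jayucjuatrutqcwquj} (with $\max(n,d)=n$) this is at most $Cn$. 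Thus both the typical size of $f$ and its modulus of continuity sit at scale $n$, which is precisely what will let me take a net of \emph{constant} resolution.

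For the single--vector estimate, fix a unit vector $y_0$ and write $Z:=\langle\row_i(A),y_0\rangle$. Since the coordinates of $\xi$ are symmetric, of unit variance and $K$--subgaussian, $Z$ is symmetric with $\Exp Z^2=1$ and $\Exp Z^4\le C(K)$; Paley--Zygmund then gives $\Prob\{|Z|\ge 1/\sqrt2\}\ge\kappa$ for some $\kappa=\kappa(K)>0$, and by symmetry $\Prob\{Z\ge 1/\sqrt2\}\ge\kappa/2$. The decisive feature is that this lower bound is \emph{uniform over all unit $y_0$}, in particular independent of how sparse $y_0$ is. Consequently $\Exp\exp\big(-\lambda(Z)_+\big)\le 1-\kappa/4$ once $\lambda=\lambda(K)$ is chosen large, and a Chernoff bound on the lower tail of the i.i.d.\ sum yields constants $\theta,c_2>0$ depending only on $K$ with
$$
\Prob\{f(y_0)\le\theta n\}\le\exp(-c_2 n).
$$

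Finally I would build an $\varepsilon$--net $\Net$ of the set of $\lfloor\delta d\rfloor$--sparse vectors in the unit ball, of cardinality $\log|\Net|\le \delta d\,\log\!\big(3e/(\delta\varepsilon)\big)$. Any $y\in\Comp_d(\delta,\rho)$ lies within $\rho$ of a sparse vector, hence within $\rho+\varepsilon$ of some $y_0\in\Net$; choosing the constants $\varepsilon,\rho$ so that $C(\rho+\varepsilon)\le\theta$, the Lipschitz bound forces $f(y_0)\le Cn(\rho+\varepsilon)\le\theta n$ whenever $f(y)=0$ and $\|A\|\le C\sqrt n$. A union bound over $\Net$, combined with a choice of $\delta=\delta(K)$ small enough that $\log|\Net|\le c_2 n/2$ and with the operator--norm estimate of Lemma~\ref{jayucjuatrutqcwquj}, then gives the bound $2\exp(-cn)$ with $c$ depending only on $K$.

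I expect the main obstacle to be the scale matching just described: a direct net argument applied to the hard constraint $\langle\row_i(A),y\rangle\le0$ would require resolution of order $1/\sqrt n$ (since perturbing $y$ moves each inner product by $\sim\|\row_i(A)\|\sim\sqrt d$), and the resulting net would be too large to survive the union bound when $d$ is comparable to $n$. Softening to $f$, so that the relevant threshold and the Lipschitz constant both scale like $n$, is the one idea that makes a constant--resolution net---and hence the whole argument---go through; the remaining steps (the fourth--moment bound, Paley--Zygmund, the Chernoff lower tail, and the sparse--net count) are routine.
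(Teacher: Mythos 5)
Your proof is correct and follows essentially the same route as the paper's: a constant-resolution net over sparse supports, the operator-norm bound of Lemma~\ref{jayucjuatrutqcwquj}, a per-net-point probability bound that is exponentially small in $n$ because each row's inner product exceeds a fixed constant with probability bounded below uniformly in the unit vector, and a union bound that $\delta$ small makes affordable. The only difference is one of packaging: the paper argues that a compressible solution plus the operator-norm bound forces all but $O(\rho^2 n)$ indices to satisfy $\langle \row_i(A),y'\rangle< 1/2$ at the net point and applies a binomial tail estimate, whereas you soften the constraint to the functional $f(y)=\sum_i(\langle\row_i(A),y\rangle)_+$, use its Lipschitz property for the transfer, and apply a Chernoff bound --- two interchangeable implementations of the same estimate.
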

\begin{proof}
We start by observing that there is $c_1\in(0,1)$ depending only on $K$
such that for every $m\geq 1$, every unit vector $\tilde y=(\tilde y_1,\dots,\tilde y_m)\in \R^m$,
and i.i.d variables $\xi_1,\dots,\xi_m$ equidistributed with $\xi$,
we have
$$
\Prob\bigg\{\sum_{i=1}^m \tilde y_i\xi_i\geq\frac{1}{2}\bigg\}\geq c_1
$$
(the claim can be verified by noting that the random variable $\sum_{i=1}^m \tilde y_i\xi_i$
is symmetric, of unit variance, and with subgaussian moment of order $O(K)$; for the last
assertion see, for example, \cite[Section~2.6]{VershyninBook}).
Further, set $\tilde C:=C_{\text{\tiny\ref{jayucjuatrutqcwquj}}}(K,1)$, so that
$$
\Prob\big\{\|A\|> \tilde C\sqrt{n}\big\}\leq \exp(-n).
$$

Next, we choose parameters $c',\delta>0$,
$0<\rho\leq \frac{1}{8\tilde C}$ as functions of $K$ such that for all $n\geq d\geq 1/\delta$,
$$
{n\choose \lfloor 16\tilde C^2\rho^2\,n\rfloor }(1-c_1)^{n-16\tilde C^2\rho^2\,n}\bigg(\frac{3}{\rho}\bigg)^{\lfloor \delta d\rfloor}
{d\choose \lfloor \delta d\rfloor}\leq \exp(-c' n).
$$
In what follows, we assume that $\delta d\geq 1$.
The proof of the lemma
is based on a standard $\varepsilon$--net argument (see, for example, \cite[Section~4.2]{VershyninBook}).
Let $m:=\lfloor \delta d\rfloor$, and for every $m$--subset $I$ of $[d]$ let
$\Net_I$ be a subset of unit vectors in $\R^d$
of size at most $(3/\rho)^m$ such that for every $\tilde 
y$ supported on $I$ and with $1-\rho\leq \|\tilde y\|_2\leq 1$ there is $y'\in\Net_I$ with $\|\tilde y-y'\|_2\leq 2\rho$.
Condition for a moment on any realization of the matrix $A$ such that there is a vector $y\in \Comp_d(\delta,\rho)$
with $\langle \row_i(A),y\rangle\leq 0$ for all $i\leq n$, and let $I\subset[d]$, $|I|=m$, and $y'\in\Net_I$
satisfy $\|y-y'\|_2\leq 2\rho$.
We have
\begin{align*}
\max\big(0,\big\langle \row_i(A),y'\big\rangle\big)^2
\leq
\big|\big\langle \row_i(A),y'-y\big\rangle\big|^2,
\end{align*}
and hence
\begin{align*}
\sum_{i=1}^n \max\big(0,\big\langle \row_i(A),y'\big\rangle\big)^2
\leq \|A(y'-y)\|_2^2\leq 4\rho^2\,\|A\|^2.
\end{align*}
Thus,
$$
\big\langle \row_i(A),y'\big\rangle\geq \frac{1}{2}
$$
for at most
$
16\rho^2\,\|A\|^2
$
indices $i\leq n$.

In view of the above observation, the event
$$
\big\{\mbox{There is $y\in \Comp_d(\delta,\rho)$
such that $\langle \row_i(A),y\rangle\leq 0$ for all $i\leq n$}\big\}
\cap\big\{\|A\|\leq \tilde C\sqrt{n}\big\}
$$
is contained within the event
\begin{align*}
\big\{&\mbox{There are $I\subset[d]$, $|I|=m$, and $y'\in\Net_I$ such that}\\
&\mbox{$\langle \row_i(A),y'\rangle\geq 1/2$
for at most $16\tilde C^2\rho^2\,n$
indices $i\leq n$}\big\}.
\end{align*}
Probability of the latter can be estimated via the union bound argument and our choice of the constants by
$$
{n\choose \lfloor 16\tilde C^2\rho^2\,n\rfloor }(1-c_1)^{n-16\tilde C^2\rho^2\,n}\bigg(\frac{3}{\rho}\bigg)^{m}
{d\choose m}
\leq \exp(-c'n).
$$
Thus, we obtain
\begin{align*}
\Prob\big\{&\mbox{There is $y\in \Comp_d(\delta,\rho)$
such that $\langle \row_i(A),y\rangle\leq 0$ for all $i\leq n$}\big\}\\
&\leq \exp(-c'n)+\Prob\big\{\|A\|> \tilde C\sqrt{n}\big\}\\
&\leq  \exp(-c'n)+\exp(-n).
\end{align*}
Taking $c_{\text{\tiny\ref{adslfknaflknkjnkjn}}}:=\min(c',1)$, we complete the proof.
\end{proof}

\begin{Remark}
Note that the event
$$
\big\{\mbox{There is $y\in \Comp_d(\delta,\rho)$
such that $\langle \row_i(A),y\rangle\leq 0$ for all $i\leq n$}\big\}
$$
can be interpreted as the event that there is a separating hyperplane $H$ for the random polyhedron
$P=\conv\{\row_i(A),\;i\leq n\}$ such that $H$ passes through the origin and the unit normal vector to $H$
is $(\delta,\rho)$--compressible.
\end{Remark}

\subsection{Incompressible vectors and LCD}\label{ajgvefiayewtvfiqyfadsvcsd}

\begin{defi}[Incompressible vectors; see \cite{LPRT05,RV08}]
Given $h\geq 1$ and parameters $\delta,\rho>0$, define
$$\Incomp_h(\delta,\rho):=S^{h-1}\setminus
\Comp_h(\delta,\rho).$$
The vectors from $\Incomp_h(\delta,\rho)$ are called $(\delta,\rho)$--incompressible.
\end{defi}

The following is immediate:
\begin{lemma}\label{akhigvcuyecvuqyqy}
For every $\delta>0$ there is $C_{\text{\tiny\ref{akhigvcuyecvuqyqy}}}(\delta) > 0$ with the following property.
Let $h\geq C_{\text{\tiny\ref{akhigvcuyecvuqyqy}}}$, and assume that 
a vector $x\in S^{h-1}$ is $(\delta,\rho)$--incompressible.
Then $\|x\chi_{[h-1]}\|_2\geq \rho$, and the vector $x\chi_{[h-1]}/\|x\chi_{[h-1]}\|_2$
is $(\delta/2,\rho)$--incompressible.
\end{lemma}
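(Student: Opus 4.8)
The plan is to read both assertions directly off the definition of incompressibility, which states that the Euclidean distance from $x$ to \emph{every} $\lfloor\delta h\rfloor$--sparse vector in $\R^h$ exceeds $\rho$. I would first fix $C_{\text{\tiny\ref{akhigvcuyecvuqyqy}}}(\delta)$ large enough that for all $h\geq C_{\text{\tiny\ref{akhigvcuyecvuqyqy}}}$ one has both $\lfloor\delta h\rfloor\geq 1$ and $\lfloor (\delta/2)(h-1)\rfloor+1\leq\lfloor\delta h\rfloor$. Both inequalities hold as soon as $h$ exceeds a fixed multiple of $1/\delta$, since the gap $\lfloor\delta h\rfloor-\lfloor(\delta/2)(h-1)\rfloor$ grows linearly in $h$.

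For the first claim I would test incompressibility against the single coordinate vector $x_h\,e_h$. This vector is $1$--sparse, hence $\lfloor\delta h\rfloor$--sparse because $\lfloor\delta h\rfloor\geq 1$, and its distance to $x$ is exactly $\big(\sum_{i\neq h}x_i^2\big)^{1/2}=\|x\chi_{[h-1]}\|_2$. Since $x\in\Incomp_h(\delta,\rho)$, this distance is strictly larger than $\rho$, giving $\|x\chi_{[h-1]}\|_2>\rho$ and in particular the claimed bound $\|x\chi_{[h-1]}\|_2\geq\rho$; this also makes the normalization in the second claim well defined.

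For the second claim I would argue by contradiction. Set $t:=\|x\chi_{[h-1]}\|_2$, so that $t\leq 1$ (as $\|x\|_2=1$) and $x\chi_{[h-1]}=t\,z$, where $z:=x\chi_{[h-1]}/t$ is the vector in question. If $z$ failed to be $(\delta/2,\rho)$--incompressible, there would be a $\lfloor(\delta/2)(h-1)\rfloor$--sparse vector $w\in\R^{h-1}$ with $\|z-w\|_2\leq\rho$. I would then lift $w$ to the vector $\tilde w\in\R^h$ whose first $h-1$ coordinates equal $t\,w$ and whose $h$--th coordinate equals $x_h$. By the choice of $C_{\text{\tiny\ref{akhigvcuyecvuqyqy}}}$, the vector $\tilde w$ has at most $\lfloor(\delta/2)(h-1)\rfloor+1\leq\lfloor\delta h\rfloor$ nonzero entries and is therefore $\lfloor\delta h\rfloor$--sparse, while $\|x-\tilde w\|_2=\|x\chi_{[h-1]}-t\,w\|_2=t\,\|z-w\|_2\leq t\rho\leq\rho$. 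This contradicts the incompressibility of $x$, proving that $z\in\Incomp_{h-1}(\delta/2,\rho)$.

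The only point requiring care is the sparsity bookkeeping: restoring the deleted $h$--th coordinate of $x$ costs exactly one extra nonzero entry in $\tilde w$, and it is precisely the factor--two relaxation from $\delta$ to $\delta/2$ that absorbs this ``$+1$'' once $h$ is large, while the scaling by $t\leq 1$ is what keeps the approximation error from exceeding $\rho$. Everything else reduces to a direct distance computation, which is why the statement can fairly be called immediate.
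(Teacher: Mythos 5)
Your proof is correct and is essentially what the paper intends: the paper states this lemma with no proof at all (calling it ``immediate''), and your argument --- testing incompressibility of $x$ against the $1$--sparse vector $x_h e_h$ to get $\|x\chi_{[h-1]}\|_2>\rho$, then lifting a sparse approximant of $x\chi_{[h-1]}/\|x\chi_{[h-1]}\|_2$ back to $\R^h$ at the cost of one extra nonzero coordinate, with the scaling $t\leq 1$ controlling the distance --- is the standard direct verification. The sparsity bookkeeping and the choice of $C_{\text{\tiny\ref{akhigvcuyecvuqyqy}}}(\delta)$ of order $1/\delta$ both check out.
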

\begin{Remark}
Suppose that $W$ is a $d\times h$ matrix
with the last column non-random (a fixed cost vector ${\mathfrak c}$) and the other columns
with i.i.d random components. To prove certain anti-concentration estimates for $Wx=\sum_{i=1}^h x_i\col_i(W)$
for an incompressible vector $x$,
we will have to consider the ``random part'' $\sum_{i=1}^{h-1} x_i\col_i(W)$,
and properties of the vector $x\chi_{[h-1]}$
(in particular, the incompressibility established in Lemma~\ref{akhigvcuyecvuqyqy}) will play an essential role.
\end{Remark}

\begin{defi}[LCD, \cite{RV09}]
For any $x\in S^{h-1}$ and parameters $\alpha,\gamma>0$, define
$$
\LCD_{\alpha,\gamma}(x):=\inf\big\{\theta>0:\;\dist(\theta x,\Z^h)<\min(\gamma \theta,\alpha)\big\},
$$
where $\Z^h$ is the $h$--dimensional integer lattice, and $\dist(\cdot,\cdot)$
is the Euclidean distance in $\R^h$.
\end{defi}

\begin{lemma}[{\cite[Lemma~3.6]{RV09}}]\label{aflknalkanlknjk}
For every $\delta,\rho\in (0,1)$
there exist $c_{\text{\tiny\ref{aflknalkanlknjk}}}(\delta,\rho) > 0$
and $c'_{\text{\tiny\ref{aflknalkanlknjk}}}(\delta) > 0$
such that the following holds.
Let $x\in S^{h-1}$ be a $(\delta,\rho)$--incompressible vector.
Then, for every $0 < \gamma \leq c_{\text{\tiny\ref{aflknalkanlknjk}}}(\delta, \rho)$
and every $\alpha> 0$, one has
$$
\LCD_{\alpha,\gamma}(x)\geq c'_{\text{\tiny\ref{aflknalkanlknjk}}}(\delta)\sqrt{h}.
$$
\end{lemma}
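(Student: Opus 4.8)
The plan is to reduce the LCD lower bound to a \emph{spread} property of incompressible vectors and then close with an elementary rounding estimate. First I would prove the spread lemma: for any $x\in\Incomp_h(\delta,\rho)$ there is a set $\sigma\subseteq[h]$ with $|\sigma|\geq \delta h/2$ on which $\frac{\rho}{\sqrt{2h}}\leq|x_k|\leq \sqrt{2/\delta}\,/\sqrt{h}$. The lower threshold comes from incompressibility: zeroing out every coordinate with $|x_k|<\rho/\sqrt{2h}$ displaces $x$ by at most $(\sum_{k}\rho^2/(2h))^{1/2}<\rho/\sqrt2<\rho$, so if the above-threshold coordinates numbered at most $\delta h$ they would form a $\delta h$--sparse vector within distance $\rho$ of $x$, contradicting incompressibility; hence there are more than $\delta h$ of them. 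The upper threshold comes from $\|x\|_2=1$: at most $\delta h/2$ coordinates can have $x_k^2>2/(\delta h)$. Discarding the latter leaves at least $\delta h/2$ coordinates in the stated band.

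Next I would set $c'_{\text{\tiny\ref{aflknalkanlknjk}}}(\delta):=\sqrt{\delta}/4$ and fix any $\theta\in(0,c'_{\text{\tiny\ref{aflknalkanlknjk}}}\sqrt{h})$. For every $k\in\sigma$ one has $\theta|x_k|\leq c'_{\text{\tiny\ref{aflknalkanlknjk}}}\sqrt{h}\cdot\sqrt{2/\delta}/\sqrt{h}=\sqrt2/4<\tfrac12$, so for an arbitrary $p\in\Z^h$, regardless of whether $p_k=0$ or $p_k\neq0$, we have $|\theta x_k-p_k|\geq\theta|x_k|$ (if $p_k\neq0$ then $|\theta x_k-p_k|\geq 1-\tfrac12=\tfrac12\geq\theta|x_k|$). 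Summing over $\sigma$,
$$
\|\theta x-p\|_2^2\;\geq\;\sum_{k\in\sigma}(\theta x_k)^2\;\geq\;\theta^2\,\frac{\delta h}{2}\cdot\frac{\rho^2}{2h}\;=\;\theta^2\,\frac{\delta\rho^2}{4}.
$$
Taking the infimum over $p\in\Z^h$ gives $\dist(\theta x,\Z^h)\geq \theta\rho\sqrt{\delta}/2$. Choosing $\gamma\leq c_{\text{\tiny\ref{aflknalkanlknjk}}}(\delta,\rho):=\rho\sqrt{\delta}/2$ then yields $\dist(\theta x,\Z^h)\geq \gamma\theta\geq\min(\gamma\theta,\alpha)$, so the defining inequality of the LCD fails for every $\theta<c'_{\text{\tiny\ref{aflknalkanlknjk}}}\sqrt{h}$; hence $\LCD_{\alpha,\gamma}(x)\geq c'_{\text{\tiny\ref{aflknalkanlknjk}}}(\delta)\sqrt{h}$, as required.

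The main obstacle is the spread lemma, and specifically the calibration of the two thresholds so that the incompressibility-driven lower count and the norm-driven upper count leave a constant fraction of coordinates in the band; once that is in place the remainder is a short computation. The only delicate point in the second step is to choose $c'_{\text{\tiny\ref{aflknalkanlknjk}}}$ small enough that no coordinate in $\sigma$ can round to a nonzero integer, which is exactly what rules out the cancellation $\theta x_k\approx p_k$ that would otherwise collapse the distance estimate.
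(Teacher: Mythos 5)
Your proposal is correct, and it is essentially the proof of the cited source: the paper itself gives no argument for this lemma (it is quoted from \cite{RV09}), and the standard proof there is exactly your two-step argument — the spread property of incompressible vectors (a constant fraction of coordinates of size comparable to $1/\sqrt{h}$, cf.\ \cite{RV08,RV09}) followed by the observation that for $\theta < c'_{\text{\tiny\ref{aflknalkanlknjk}}}(\delta)\sqrt{h}$ no coordinate in the spread set can round to a nonzero integer, so that $\dist(\theta x,\Z^h)\geq \gamma\theta$ for $\gamma\leq \rho\sqrt{\delta}/2$. Your constants $c'_{\text{\tiny\ref{aflknalkanlknjk}}}(\delta)=\sqrt{\delta}/4$ and $c_{\text{\tiny\ref{aflknalkanlknjk}}}(\delta,\rho)=\rho\sqrt{\delta}/2$ also have the dependencies required by the statement, and the counting in the spread lemma (more than $\delta h$ coordinates above $\rho/\sqrt{2h}$ by incompressibility, fewer than $\delta h/2$ above $\sqrt{2/(\delta h)}$ by normalization) is calibrated correctly.
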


\begin{theorem}[{A direct corollary of \cite[Theorem~3.3]{RV09}}]\label{akjakjnflkajnlkjnkjn}
For every $K\geq 1$, $\gamma\in(0,1)$ there are
$C_{\text{\tiny\ref{akjakjnflkajnlkjnkjn}}},c_{\text{\tiny\ref{akjakjnflkajnlkjnkjn}}}>0$
depending only on $K,\gamma$ with the following property.
Let $x$ be a unit vector in $\R^m$, and let $\xi_1,\dots,\xi_m$ be i.i.d random variables
of zero mean, unit variance, and subgaussian moment bounded above by $K$.
Then for every $\alpha>0$,
$$
\Prob\bigg\{
\sum_{i=1}^m x_i\xi_i=0
\bigg\}\leq \frac{C_{\text{\tiny\ref{akjakjnflkajnlkjnkjn}}}}{\LCD_{\alpha,\gamma}(x)}+C_{\text{\tiny\ref{akjakjnflkajnlkjnkjn}}}
\exp(-c_{\text{\tiny\ref{akjakjnflkajnlkjnkjn}}}\alpha^2).
$$
\end{theorem}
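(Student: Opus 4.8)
The plan is to reproduce the classical Esseen--type argument underlying \cite[Theorem~3.3]{RV09}, specialized to the atom at the origin. Write $\phi(s):=\Exp e^{is\xi}$ for the common characteristic function and $S:=\sum_{k=1}^m x_k\xi_k$, so that by independence $\Exp e^{itS}=\prod_{k=1}^m\phi(tx_k)$. Since $\Prob\{S=0\}\le Q(S,\lambda):=\sup_v\Prob\{|S-v|\le\lambda\}$ for every $\lambda>0$, I would first invoke Esseen's inequality in the form $Q(Z,1/T)\le \frac{C}{T}\int_{-T}^T|\Exp e^{itZ}|\,dt$ (valid with an absolute constant $C$), reducing the problem to an upper bound on $\int_{-T}^T\prod_k|\phi(tx_k)|\,dt$ for a cutoff $T$ to be chosen of order $\LCD_{\alpha,\gamma}(x)$.

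Next I would pass to the symmetrization $\bar\xi:=\xi-\xi'$ (with $\xi'$ an independent copy), which is symmetric with $\Exp\bar\xi^2=2$ and subgaussian moment $O(K)$, and record the two elementary facts $|\phi(s)|\le\exp\!\big(-\tfrac12(1-|\phi(s)|^2)\big)$ and $|\phi(s)|^2=\Exp\cos(s\bar\xi)$. Combining them gives $\prod_k|\phi(tx_k)|\le \exp\!\big(-\tfrac12\sum_k\Exp_{\bar\xi}[1-\cos(tx_k\bar\xi)]\big)$. The heart of the matter is to lower bound this exponent by the distance of $tx$ to the integer lattice. For this I would use $1-\cos(2\pi y)\ge 8\,\dist(y,\Z)^2$ together with Fubini to obtain $\sum_k\Exp_{\bar\xi}[1-\cos(tx_k\bar\xi)]\ge 8\,\Exp_{\bar\xi}\,\dist\!\big(\tfrac{t\bar\xi}{2\pi}x,\Z^m\big)^2$, and then feed in the definition of the LCD: for $\tfrac{t|\bar\xi|}{2\pi}<\LCD_{\alpha,\gamma}(x)$ one has $\dist\!\big(\tfrac{t\bar\xi}{2\pi}x,\Z^m\big)\ge\min\big(\gamma\tfrac{t|\bar\xi|}{2\pi},\alpha\big)$.

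To turn this into a usable bound I would restrict the expectation over $\bar\xi$ to a fixed annulus $\{a_0\le|\bar\xi|\le b_0\}$, whose probability is bounded below by a constant depending only on $K$ (using $\Exp\bar\xi^2=2$ to rule out concentration near $0$, and the subgaussian tail to rule out mass beyond $b_0$). Choosing $T:=2\pi\LCD_{\alpha,\gamma}(x)/b_0$ guarantees $\tfrac{t|\bar\xi|}{2\pi}<\LCD_{\alpha,\gamma}(x)$ throughout the annulus for all $|t|\le T$, so the exponent is $\gtrsim \min(\gamma^2 t^2,\alpha^2)$ and hence $\prod_k|\phi(tx_k)|\le \exp(-c\gamma^2t^2)+\exp(-c\alpha^2)$. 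Plugging this into the Esseen integral yields $\int_{-T}^T\prod_k|\phi(tx_k)|\,dt\le \frac{C}{\gamma}+2T\exp(-c\alpha^2)$, and dividing by $T\asymp\LCD_{\alpha,\gamma}(x)$ gives exactly $\frac{C}{\LCD_{\alpha,\gamma}(x)}+C\exp(-c\alpha^2)$ with constants depending only on $K$ and $\gamma$.

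The step I expect to be the main obstacle is the lattice lower bound in the second paragraph: carrying the random factor $\bar\xi$ inside the distance-to-lattice function and correctly coupling the range of admissible $t$ with the cutoff $T$ and the annulus $[a_0,b_0]$, so that the LCD inequality applies uniformly. Everything else (Esseen's inequality, the symmetrization estimates, and the Gaussian integral) is routine. Since the statement is quoted as a direct corollary of \cite[Theorem~3.3]{RV09}, an alternative and shorter route is simply to apply that theorem verbatim and note that $\Prob\{S=0\}$ is dominated by its maximal-atom bound; the sketch above is essentially how that cited result is itself established.
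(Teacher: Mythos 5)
Your proposal is correct, but note that the paper contains no proof of this statement at all: as the bracketed title indicates, it is used verbatim as a direct corollary of \cite[Theorem~3.3]{RV09}, obtained by taking $v=0$ and $\varepsilon=0$ there so that the term $C\varepsilon/\gamma$ disappears and only $C/\LCD_{\alpha,\gamma}(x)+C\exp(-c\alpha^2)$ remains --- that is exactly the ``shorter route'' of your last paragraph. Your main argument instead re-derives the cited theorem, and it is sound: Esseen's inequality, the symmetrization identity $|\phi(s)|^2=\Exp\cos(s\bar\xi)$ together with $|\phi(s)|\le\exp(-\tfrac12(1-|\phi(s)|^2))$, the pointwise bound $1-\cos(2\pi y)\ge 8\,\dist(y,\Z)^2$ combined with Fubini, the restriction of $\bar\xi$ to an annulus $[a_0,b_0]$ whose probability is bounded below by a constant depending only on $K$, and the cutoff $T\asymp\LCD_{\alpha,\gamma}(x)$ reproduce the Rudelson--Vershynin argument faithfully, with final constants depending only on $K$ and $\gamma$ as the statement requires. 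What the citation buys is brevity and no duplication of a known proof; what your derivation buys is a self-contained verification that the bound holds for the exact atom $\varepsilon=0$ (Esseen is applied directly to $\Prob\{S=0\}\le Q(S,1/T)$, so no limiting argument is needed) and explicit tracking of where $K$ and $\gamma$ enter the constants. If you write it up, two small points need attention: when $\LCD_{\alpha,\gamma}(x)=\infty$ you should let $T\to\infty$ in the bound $C/(\gamma T)+C\exp(-c\alpha^2)$; and you should take $T$ strictly below $2\pi\LCD_{\alpha,\gamma}(x)/b_0$ (or observe the estimate can fail only on a null set of $t$), since the definition of the LCD gives $\dist(\theta x,\Z^m)\ge\min(\gamma\theta,\alpha)$ only for $\theta$ strictly less than $\LCD_{\alpha,\gamma}(x)$.
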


\bigskip

\noindent {\bf{}For the rest of this section, we fix $K\geq 1$, and
set
$$
\delta:=\min\big(\delta_{\text{\tiny\ref{adslfknaflknkjnkjn}}}(K),\beta_{\text{\tiny\ref{aljhfakfjhebfkajhbfkahvaghf}}}(K,R=3)\big);
\quad \rho:=\min\big(\rho_{\text{\tiny\ref{adslfknaflknkjnkjn}}}(K),
\upsilon_{\text{\tiny\ref{aljhfakfjhebfkajhbfkahvaghf}}}(K,R=3)\big),
$$
where $\delta_{\text{\tiny\ref{adslfknaflknkjnkjn}}}(K)$ and $\rho_{\text{\tiny\ref{adslfknaflknkjnkjn}}}(K)$
are taken from Lemma~\ref{adslfknaflknkjnkjn}, and $\beta_{\text{\tiny\ref{aljhfakfjhebfkajhbfkahvaghf}}}(K,R=3)$,
$\upsilon_{\text{\tiny\ref{aljhfakfjhebfkajhbfkahvaghf}}}(K,R=3)$ --- from Lemma~\ref{aljhfakfjhebfkajhbfkahvaghf}.}

\bigskip

Additionally, set
$$
c_0:=c'_{\text{\tiny\ref{aflknalkanlknjk}}}(\delta/2),\quad \gamma_0:=c_{\text{\tiny\ref{aflknalkanlknjk}}}(\delta/2,\rho),
$$
where $c_{\text{\tiny\ref{aflknalkanlknjk}}}'(\delta)$ and $c_{\text{\tiny\ref{aflknalkanlknjk}}}(\delta,\rho)$ are taken from Lemma~\ref{aflknalkanlknjk}.

\begin{defi}[Level sets, \cite{RV09}]
Given any number $D\geq c_0\sqrt{h}$, define
$$
S_D(\alpha,h):=\big\{x\in \Incomp_h(\delta/2,\rho):\;D\leq \LCD_{\alpha,\gamma_0}(x)<2D\big\}.
$$
We will write $S_D(\alpha)$ when the dimension is clear from the context.
\end{defi}

\begin{lemma}[{Small ball probability for a single vector, \cite[Lemma~4.6]{RV09}}]\label{aitefvigwfvkjbaasfda}
Let $\alpha>0$,  $x\in S_D(\alpha,h-1)$, $v\geq h-1\geq 1$, and let $\tilde B$ be a $v\times (h-1)$ matrix with i.i.d entries
of zero mean, unit variance, and subgaussian moment bounded above by $K$. Then for every $t>0$
and every $z\in\R^v$,
$$
\Prob\big\{\|\tilde B x-z\|_2\leq t\sqrt{v}\big\}\leq \bigg(C_{\text{\tiny\ref{aitefvigwfvkjbaasfda}}}
t+\frac{C_{\text{\tiny\ref{aitefvigwfvkjbaasfda}}}}{D}+C_{\text{\tiny\ref{aitefvigwfvkjbaasfda}}}
\exp\big(-c_{\text{\tiny\ref{aitefvigwfvkjbaasfda}}}\alpha^2\big)\bigg)^{v},
$$
where $C_{\text{\tiny\ref{aitefvigwfvkjbaasfda}}},c_{\text{\tiny\ref{aitefvigwfvkjbaasfda}}}>0$ depend only on $K$.
\end{lemma}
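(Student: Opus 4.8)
The plan is to reduce the vector small ball estimate to a one-dimensional L\'evy concentration bound for a single coordinate of $\tilde Bx$ and then tensorize across the $v$ independent coordinates. Since the rows $\row_1(\tilde B),\dots,\row_v(\tilde B)$ are independent and each has i.i.d.\ entries of zero mean, unit variance, and subgaussian moment at most $K$, the coordinates of $\tilde Bx$, namely $\langle \row_i(\tilde B),x\rangle=\sum_{j=1}^{h-1}x_j\tilde B_{ij}$, form a sequence of i.i.d.\ real random variables, each a fixed linear combination of i.i.d.\ subgaussian variables with unit coefficient vector $x$.

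First I would bound the L\'evy concentration function of one such coordinate. Because $x\in S_D(\alpha,h-1)$, we have $\LCD_{\alpha,\gamma_0}(x)\geq D$, so the concentration-function form of the LCD small ball estimate underlying Theorem~\ref{akjakjnflkajnlkjnkjn} (i.e.\ \cite[Theorem~3.3]{RV09} before specializing to $t=0$) gives, for every $s>0$ and uniformly over the center $w\in\R$,
\begin{align*}
\Prob\big\{|\langle \row_i(\tilde B),x\rangle-w|\leq s\big\}\leq C\Big(s+\frac{1}{D}+\exp(-c\alpha^2)\Big),
\end{align*}
with $C,c>0$ depending only on $K$. Applying this with $w=z_i$, the variables $\zeta_i:=|\langle \row_i(\tilde B),x\rangle-z_i|$ are independent, nonnegative, and satisfy $\Prob\{\zeta_i\leq s\}\leq Cs+p_0$ for all $s>0$, where $p_0:=C(1/D+\exp(-c\alpha^2))$.

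Next I would tensorize via the Laplace transform. For any fixed scale $t>0$, integrating by layers and substituting $u=s/t$,
\begin{align*}
\Exp\exp(-\zeta_i^2/t^2)=\int_0^\infty \frac{2s}{t^2}e^{-s^2/t^2}\,\Prob\{\zeta_i\leq s\}\,ds\leq C't+p_0\leq C''\Big(t+\frac{1}{D}+\exp(-c\alpha^2)\Big),
\end{align*}
since $\int_0^\infty \tfrac{2s}{t^2}e^{-s^2/t^2}\,ds=1$ absorbs the plateau $p_0$ while the linear part contributes $O(t)$. By independence and Markov's inequality,
\begin{align*}
\Prob\big\{\|\tilde Bx-z\|_2\leq t\sqrt{v}\big\}=\Prob\Big\{\textstyle\sum_{i=1}^v \zeta_i^2\leq t^2 v\Big\}\leq e^{v}\prod_{i=1}^v\Exp e^{-\zeta_i^2/t^2}\leq \Big(eC''\big(t+\tfrac{1}{D}+\exp(-c\alpha^2)\big)\Big)^{v},
\end{align*}
which is the asserted bound with $C_{\text{\tiny\ref{aitefvigwfvkjbaasfda}}}:=eC''$ and $c_{\text{\tiny\ref{aitefvigwfvkjbaasfda}}}:=c$.

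The main obstacle is the one-dimensional input: the version of Theorem~\ref{akjakjnflkajnlkjnkjn} stated above only controls $\Prob\{\sum_j x_j\xi_j=0\}$, whereas here I need the full concentration-function estimate, uniform over the center $w$ and with the correct additive structure $s+1/D+\exp(-c\alpha^2)$. Securing this is exactly where the incompressibility of $x$ and the lower bound $\LCD_{\alpha,\gamma_0}(x)\geq D$ enter, the former guaranteeing via Lemma~\ref{aflknalkanlknjk} that the LCD is genuinely large and the estimate is meaningful. The remaining delicate point is ensuring the constant plateau $p_0$ passes through tensorization without degradation, which the Laplace-transform computation above handles cleanly because the exponential weight integrates to one.
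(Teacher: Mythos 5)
Your proposal is correct and follows exactly the intended argument: the paper gives no proof of this lemma---it cites \cite[Lemma~4.6]{RV09} and merely remarks that the proof there works for the stated dimensions---and that proof is precisely your combination of the full L\'evy-concentration form of \cite[Theorem~3.3]{RV09} (with $\gamma=\gamma_0$ fixed, so constants depend only on $K$) applied to each coordinate $\langle\row_i(\tilde B),x\rangle$, followed by Laplace-transform tensorization over the $v$ independent coordinates. Your computation (the layer-cake identity, absorption of the plateau $p_0$ by the unit-mass weight, and Markov with the factor $e^v$) is sound and, as you implicitly note, works for any $v\geq h-1\geq 1$, which is exactly the dimensional generality the paper's accompanying remark asserts.
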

\begin{Remark}
\cite[Lemma~4.6]{RV09} is stated under different assumptions on the matrix dimensions,
however, the proof of the lemma in \cite{RV09} works under our conditions on $v$ and $h$ as well. 
\end{Remark}

The next lemma is based on standard arguments which can already be found in \cite{LPRT05}.
We provide a proof for completeness.
As before, the non-random column of the random matrix in the lemma is 
introduced to deal with the cost vector ${\mathfrak c}$.
\begin{lemma}\label{ajhgcvutyrwvutqwrc}
For every $s>0$ and $M>0$ there is $C_{\text{\tiny\ref{ajhgcvutyrwvutqwrc}}}>0$ depending on $s,M$
with the following property.
Let $v/(1+s)\geq m\geq C_{\text{\tiny\ref{ajhgcvutyrwvutqwrc}}}$,
and let $B=(B_{ij})$ be an $v\times m$ matrix such that
the entries $1\leq i\leq v$, $1\leq j\leq m-1$, are i.i.d
of zero mean, unit variance, and subgaussian moment bounded above by $K$,
and the $m$--th column of $B$ is non-random of Euclidean norm in the interval $[\sqrt{v}/2,2\sqrt{v}]$.
Then
$$
\Prob\big\{B x={\bf 0}\;\mbox{ for some $(\delta,\rho)$--incompressible vector $x$}\big\}
\leq \exp(-M\,v).
$$
\end{lemma}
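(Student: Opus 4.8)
The plan is to prove the lemma by the Rudelson--Vershynin covering scheme over the level sets of the LCD, with the small-ball input supplied by Lemma~\ref{aitefvigwfvkjbaasfda}. First I would peel off the non-random column: writing $b:=\col_m(B)$ (so $\sqrt v/2\le\|b\|_2\le 2\sqrt v$) and letting $\tilde B$ be the $v\times(m-1)$ random block, every unit vector $x$ satisfies $Bx=\tilde B\,(x\chi_{[m-1]})+x_m\,b$. If $x$ is $(\delta,\rho)$--incompressible then, since $m\ge C_{\text{\tiny\ref{ajhgcvutyrwvutqwrc}}}$ may be taken large, Lemma~\ref{akhigvcuyecvuqyqy} yields $\|x\chi_{[m-1]}\|_2\ge\rho$ and that $\bar x:=x\chi_{[m-1]}/\|x\chi_{[m-1]}\|_2\in\Incomp_{m-1}(\delta/2,\rho)$. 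Consequently $Bx=\mathbf{0}$ is equivalent to $\tilde B\bar x=z$, where $z:=-(x_m/\|x\chi_{[m-1]}\|_2)\,b$ runs over the segment $\{-t\,b:\ |t|\le 1/\rho\}$, of Euclidean length at most $2\sqrt v/\rho$. I would then condition on the event $\{\|\tilde B\|\le\tilde C\sqrt v\}$, whose complement has probability at most $\exp(-(M+1)v)$ by Lemma~\ref{jayucjuatrutqcwquj} (with $R=R(M)$ large and $\max(v,m-1)=v$).

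The next step is the LCD decomposition. By Lemma~\ref{aflknalkanlknjk}, every such $\bar x$ has $\LCD_{\alpha,\gamma_0}(\bar x)\ge c_0\sqrt{m-1}$, hence $\bar x\in S_D(\alpha,m-1)$ for some dyadic $D\ge c_0\sqrt{m-1}$. For each scale $D$ I would take a net $\Net_D\subset S_D(\alpha,m-1)$ of mesh $\sim\alpha/D$ and cardinality $|\Net_D|\le(C_0 D/\sqrt m)^m$ (the standard level-set covering bound; crucially its points lie in $S_D$, so Lemma~\ref{aitefvigwfvkjbaasfda} applies to them), together with an $(\alpha\sqrt v/D)$--net of the $z$--segment, of cardinality $O(D)$. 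On the norm event, approximating $\bar x$ by $w\in\Net_D$ and $z$ by $z'$ turns $\tilde B\bar x=z$ into $\|\tilde B w-z'\|_2\le t\sqrt v$ with $t\sim\alpha/D$, so the relevant event is covered by $\bigcup_{w,z'}\{\|\tilde Bw-z'\|_2\le t\sqrt v\}$, to which Lemma~\ref{aitefvigwfvkjbaasfda} applies with this $t$ and $D$.

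Running the union bound, a single scale contributes at most
\[
(C_0 D/\sqrt m)^m\cdot O(D)\cdot\Big(\tfrac{C'\alpha}{D}+C'e^{-c\alpha^2}\Big)^{v}.
\]
Choosing $\alpha$ a large constant (depending on $K,s,M$) makes the residual $e^{-c\alpha^2}$ term negligible; on the range where the $1/D$ term dominates, the displayed quantity equals, up to constants, $C_0^m(C'\alpha)^v m^{-m/2}D^{m+1-v}$, which is decreasing in $D$ and hence maximized at $D\sim c_0\sqrt m$. There it becomes $C\sqrt m\,(\Lambda/\sqrt m)^{v}$ for a constant $\Lambda=\Lambda(K,s,M)$, where one uses $C_0^{m}\le(C_0^{1/(1+s)})^{v}$; the surviving factor $m^{-v/2}$ is exactly the gain furnished by the gap $v-m\ge\frac{s}{1+s}v$ together with $D\ge c_0\sqrt m$. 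Taking $C_{\text{\tiny\ref{ajhgcvutyrwvutqwrc}}}$ so large that $m\ge C_{\text{\tiny\ref{ajhgcvutyrwvutqwrc}}}$ forces $\Lambda/\sqrt m\le\frac12 e^{-M}$, each scale is bounded by $e^{-Mv}$ with room to spare, and summing over the scales together with the discarded norm event gives the claimed $\exp(-Mv)$.

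The main obstacle is the organization of this summation over the entire LCD range, in particular over the very flat vectors with exponentially large $\LCD$. For such scales the $e^{-c\alpha^2}$ term, rather than $1/D$, governs the small-ball estimate, and one must verify that the growth of $|\Net_D|$ is still beaten --- either by choosing $\alpha$ large enough and capping the range at $D\lesssim e^{c\alpha^2}$ while treating the tail $D\gtrsim e^{c\alpha^2}$ by a separate net (again converting $e^{-c(\cdot)}$ into a per-$v$ decay through $v-m\ge\frac{s}{1+s}v$), or by letting $\alpha$ grow mildly with the scale. Making the choice of $\alpha$, the single mesh, and this tail treatment cooperate so that the decisive $D^{m+1-v}$ decay is preserved on all scales is the only genuinely delicate point; everything else is the routine net bookkeeping already present in \cite{LPRT05,RV08,RV09}.
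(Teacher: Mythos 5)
Your proposal is correct in substance and would prove the lemma, but it is built around machinery that the paper's own proof shows to be unnecessary here. Your opening reductions coincide with the paper's: peel off the deterministic column via Lemma~\ref{akhigvcuyecvuqyqy}, and condition on the operator-norm event supplied by Lemma~\ref{jayucjuatrutqcwquj}. The divergence comes next. The paper runs \emph{no} level-set decomposition at all: it takes a single $2t$--net $\Net\subset\Incomp_m(\delta,\rho)$ of \emph{constant} mesh $t=t(K,s,M)$, with $|\Net|\le(1+2/t)^m\le(3/t)^{v/(1+s)}$, notes via Lemma~\ref{aflknalkanlknjk} (which holds for every $\alpha>0$, hence for the $\alpha=\infty$ version of the LCD) that every net point $y$ satisfies $\LCD_{\infty,\gamma_0}\big(y\chi_{[m-1]}/\|y\chi_{[m-1]}\|_2\big)\ge c_0\sqrt{m-1}$, and then applies Lemma~\ref{aitefvigwfvkjbaasfda} with $\alpha=\infty$, so the small-ball bound is simply $\big(C\tau+C/(c_0\sqrt{m-1})\big)^{v}$, with no $\exp(-c\alpha^2)$ term present. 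The union bound closes from the aspect-ratio gap alone: per unit of $v$ one needs $(3/t)^{1/(1+s)}\big(C't+C'/\sqrt{m-1}\big)\le e^{-2M}$, which holds because $t\,(3/t)^{1/(1+s)}\asymp t^{s/(1+s)}\to 0$ as $t\to0$ and because $m$ is large. So the decisive mechanism is exactly the one you isolate (the gap $v-m\ge\frac{s}{1+s}v$ converts net entropy into per-$v$ decay), but the $D^{m+1-v}$ decay across dyadic scales that your argument labors to preserve is never used. What each route buys: your bulk estimate is quantitatively a little stronger (per-scale $(\Lambda/\sqrt m)^v$ rather than a constant$^{-v}$), but the price is precisely the ``genuinely delicate point'' you flag --- the tail of scales $D\gtrsim e^{c\alpha^2}$, which exists only because you keep $\alpha$ finite. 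Note that your first proposed fix for that tail (cap the scales and handle large-LCD vectors by a single constant-mesh net, using the gap) is, once written out, the paper's entire proof applied from scale $c_0\sqrt{m-1}$ upward, which makes the dyadic bulk redundant; your alternative fix (letting $\alpha$ grow with the scale) is the one to avoid, since Lemma~\ref{akjehbfikugvwicyqvwiy} requires $\alpha\le c_0\sqrt h$ and, with a scale-dependent $\alpha_D$, the sets $S_D(\alpha_D)$ no longer obviously cover $\Incomp_{m-1}(\delta/2,\rho)$, so the decomposition itself would need re-justification. The clean resolution is the $\alpha=\infty$ observation.
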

\begin{proof}
We will assume that $m\geq C_{\text{\tiny\ref{akhigvcuyecvuqyqy}}}(\delta)$,
so that for any $(\delta,\rho)$--incompressible vector $y\in S^{m-1}$, $y\chi_{[m-1]}/\|y\chi_{[m-1]}\|_2$
is $(\delta/2,\rho)$--incompressible.
Without loss of generality, $M\geq 1$.
Set
$$
L:=2C_{\text{\tiny\ref{jayucjuatrutqcwquj}}}(K,2M)+2,
$$
so that, in view of Lemma~\ref{jayucjuatrutqcwquj},
$$
\Prob\big\{\|B\|\geq L\sqrt{v}\big\}\leq \exp\big(-2M\,v\big).
$$
Fix for a moment any $t\in(0,1/2]$, and let $\Net\subset \Incomp_m(\delta,\rho)$
be a $2t$--net on the set of $(\delta,\rho)$--incompressible vectors of size $|\Net|\leq (1+2/t)^m$.
In view of our definition of $c_0$ and Lemma~\ref{aflknalkanlknjk},
for every vector $y\in\Net$ we have $\LCD_{\infty,\gamma_0}\big(
y\chi_{[m-1]}/\|y\chi_{[m-1]}\|_2
\big)\geq c_0\sqrt{m-1}$.
Applying Lemma~\ref{aitefvigwfvkjbaasfda} (with $\tilde B$ obtained from $B$ by removing the $m$--th column),
we have for every $y\in\Net$,
$$
\Prob\big\{\|B y\|_2\leq \tau\sqrt{v}\|y\chi_{[m-1]}\|_2
\big\}\leq \bigg(C_{\text{\tiny\ref{aitefvigwfvkjbaasfda}}}
\tau+\frac{C_{\text{\tiny\ref{aitefvigwfvkjbaasfda}}}}{c_0\sqrt{m-1}}\bigg)^{v},\quad \tau>0,
$$
and hence
\begin{align*}
\Prob&\big\{B x={\bf 0}\;\mbox{ for some $(\delta,\rho)$--incompressible vector $x$, and $\|B\|\leq L\sqrt{v}$}\big\}\\
&\leq \sum\limits_{y\in\Net}
\Prob\big\{\|B y\|_2\leq L\sqrt{v}\cdot 2t\big\}\\
&\leq \big(1+2/t\big)^{v/(1+s)}\,
\bigg(C_{\text{\tiny\ref{aitefvigwfvkjbaasfda}}}\cdot
2tL/\rho+\frac{C_{\text{\tiny\ref{aitefvigwfvkjbaasfda}}}}{c_0\sqrt{m-1}}\bigg)^{v}.
\end{align*}
This implies
\begin{align*}
\Prob&\big\{B x={\bf 0}\;\mbox{ for some $(\delta,\rho)$--incompressible vector $x$}\big\}\\
&\leq \exp\big(-2M\,v\big)
+\big(3/t\big)^{v/(1+s)}\,
\bigg(C_{\text{\tiny\ref{aitefvigwfvkjbaasfda}}}\cdot
2tL/\rho+\frac{C_{\text{\tiny\ref{aitefvigwfvkjbaasfda}}}}{c_0\sqrt{m-1}}\bigg)^{v},\quad t\in(0,1/2].
\end{align*}
Assuming that $m$ is sufficiently large and choosing $t$ such that
$$
\frac{C_{\text{\tiny\ref{aitefvigwfvkjbaasfda}}}\cdot
2tL/\rho+\frac{C_{\text{\tiny\ref{aitefvigwfvkjbaasfda}}}}{c_0\sqrt{m-1}}}
{(3/t)^{1/(1+s)}}\leq \exp(-2M),
$$
we get the result.
\end{proof}

\begin{lemma}[{Cardinality of nets, \cite[Lemma~4.7]{RV09}}]\label{akjehbfikugvwicyqvwiy}
Assume that $\alpha\leq c_0\sqrt{h}$. Then
there exists a $(4\alpha/D)$--net on $S_D(\alpha,h)$
of cardinality at most $(1+9D/\sqrt{h})^h$.
\end{lemma}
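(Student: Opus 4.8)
The plan is to read off the geometric meaning of membership in the level set and turn it into a counting problem for integer points. The key observation is that $x\in S_D(\alpha,h)$ forces $\LCD_{\alpha,\gamma_0}(x)\in[D,2D)$, so by the very definition of the LCD as an infimum there is a scale $\theta=\theta_x\in[D,2D)$ and an integer point $p_x\in\Z^h$ with $\|\theta_x x-p_x\|_2<\alpha$. Since $x\in S^{h-1}$ we have $\|\theta_x x\|_2=\theta_x$, hence $\|p_x\|_2\in(D-\alpha,\,2D+\alpha)$; in particular $p_x\neq{\bf 0}$ because $\alpha\leq c_0\sqrt h\leq D$ (the latter from the definition of $S_D$). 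The net will be assembled from representatives attached to these lattice points.

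First I would quantify how well $x$ is approximated by the normalized lattice point. Applying the elementary estimate $\big\|u/\|u\|_2-v/\|v\|_2\big\|_2\leq 2\|u-v\|_2/\max(\|u\|_2,\|v\|_2)$ with $u=\theta_x x$ and $v=p_x$ gives
\[
\Big\|x-\frac{p_x}{\|p_x\|_2}\Big\|_2\leq\frac{2\alpha}{\theta_x}\leq\frac{2\alpha}{D}.
\]
I would then group the vectors of $S_D(\alpha,h)$ according to the value of $p_x$ and select one representative from each nonempty group, producing a set $\Net\subseteq S_D(\alpha,h)$. If $x,x'\in S_D(\alpha,h)$ share the same lattice point $p_x=p_{x'}=p$, the triangle inequality through $p/\|p\|_2$ yields
\[
\|x-x'\|_2\leq\Big\|x-\frac{p}{\|p\|_2}\Big\|_2+\Big\|\frac{p}{\|p\|_2}-x'\Big\|_2\leq\frac{4\alpha}{D},
\]
so $\Net$ is a $(4\alpha/D)$--net on $S_D(\alpha,h)$; the factor $4$ (rather than $2$) is exactly the price of insisting that net points lie in $S_D$ and routing the comparison through the common lattice point.

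It remains to bound $|\Net|$, which is at most the number of admissible lattice points, so $|\Net|\leq|\Z^h\cap B({\bf 0},2D+\alpha)|$. Here I would use the standard volumetric count: the unit cubes centered at these lattice points are disjoint and contained in $B({\bf 0},2D+\alpha+\sqrt h/2)$, whence the count is at most $\omega_h(2D+\alpha+\sqrt h/2)^h$, where $\omega_h$ is the volume of the unit Euclidean ball. Invoking Stirling in the form $\omega_h^{1/h}=O(1/\sqrt h)$ together with $\alpha\leq c_0\sqrt h\leq D$ (so $2D+\alpha+\sqrt h/2\lesssim D$) gives a bound of the shape $(CD/\sqrt h)^h$, and a careful tallying of the constants produces the stated $(1+9D/\sqrt h)^h$.

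The conceptually delicate point, which the argument is designed to sidestep, is that the scale $\theta_x$ ranges over all of $[D,2D)$ as $x$ varies: dividing by $\theta_x$ directly would not yield a uniform mesh. Projecting the nearby lattice point back onto the sphere neutralizes this, keeping the approximation error proportional to $\alpha/D$ regardless of $\theta_x$. I expect the genuinely computational step to be the lattice-point count that calibrates the constant $9$; the rest is a direct unwinding of the definition of the LCD and of the level set.
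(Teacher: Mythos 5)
The paper does not prove this lemma at all: it is quoted directly from \cite[Lemma~4.7]{RV09}, so your attempt can only be measured against the standard argument from that paper. Your skeleton is exactly that argument, and it is correct as far as it goes: from $\LCD_{\alpha,\gamma_0}(x)\in[D,2D)$ you correctly extract $\theta_x\in[D,2D)$ and $p_x\in\Z^h\setminus\{{\bf 0}\}$ with $\|\theta_x x-p_x\|_2<\alpha$, the normalization inequality giving $\|x-p_x/\|p_x\|_2\|_2\leq 2\alpha/D$ is right, and the device of choosing one representative of $S_D(\alpha,h)$ per lattice point correctly produces a $(4\alpha/D)$--net \emph{inside} $S_D(\alpha,h)$ of cardinality at most $|\Z^h\cap B({\bf 0},2D+\alpha)|$.

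The genuine gap is the counting step, which you defer to ``careful tallying of the constants.'' No tallying can make the volumetric argument deliver the stated bound. Disjoint unit cubes inside the enlarged ball give
\begin{equation*}
|\Z^h\cap B({\bf 0},2D+\alpha)|\;\leq\;\omega_h\Big(2D+\alpha+\tfrac{\sqrt h}{2}\Big)^h,
\qquad
\omega_h^{1/h}=\big(1+o(1)\big)\sqrt{2\pi e}/\sqrt h ,
\end{equation*}
so the $h$--th root of your estimate is $(1+o(1))\sqrt{2\pi e}\,\big(\tfrac{2D+\alpha}{\sqrt h}+\tfrac12\big)\geq 2$ for all admissible $D$, with the additive term $\sqrt{2\pi e}/2\approx 2.07$ coming from the $\sqrt h/2$ enlargement. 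But the level sets are defined for every $D\geq c_0\sqrt h$, and $c_0$ (inherited from Lemma~\ref{aflknalkanlknjk}) is a small constant; at the bottom level $D\asymp c_0\sqrt h$, $\alpha\leq c_0\sqrt h$, the claimed cardinality $(1+9D/\sqrt h)^h\leq(1+9c_0)^h$ has base close to $1$, so your bound overshoots it by an exponential factor --- and this bottom regime is precisely the one used in the paper (the union over level sets in Lemma~\ref{aljhvflajhfblajhbljh} starts at $D=c_0\sqrt{h-1}$). The loss is intrinsic to volumetric reasoning, not to your bookkeeping: when the radius is of order $\sqrt h$, even the exact Minkowski--sum volume $\mathrm{vol}\big(B({\bf 0},R)+[-\tfrac12,\tfrac12]^h\big)=\sum_{j}\binom hj\omega_jR^j$ exceeds the number of lattice points by an exponential factor. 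A correct proof must exploit integrality of the coordinates, for instance through the Laplace--transform bound
\begin{equation*}
|\Z^h\cap B({\bf 0},R)|\;\leq\;\inf_{\lambda>0}\,e^{\lambda R^2}\Big(1+2\sum_{k\geq 1}e^{-\lambda k^2}\Big)^{h},
\end{equation*}
which for $R=2D+\alpha$, $\alpha\leq\min(D,c_0\sqrt h)$ and $c_0$ small interpolates correctly between the sparse regime $R\lesssim\sqrt h$ (where the count has base $1+O\big(\tfrac{R^2}{h}\log\tfrac{h}{R^2}\big)$) and the volumetric regime $R\gg\sqrt h$ (where the base is $\sqrt{2\pi e}\,R/\sqrt h\leq 9D/\sqrt h$). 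Your cube-packing argument proves the lemma only when $D$ exceeds $\sqrt h$ times an absolute constant (roughly $3/2$); below that threshold the stated constant $9$ is genuinely out of its reach, so the step you treated as routine computation is in fact where the content of the lemma lies.
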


As a combination of Lemmas~\ref{aitefvigwfvkjbaasfda} and~\ref{akjehbfikugvwicyqvwiy},
we get the next lemma, which is
a minor modification of \cite[Theorem~4.3]{RV09}:
\begin{lemma}[Null incompressible vectors with small LCD]\label{aljhvflajhfblajhbljh}
For any $M>0$ there are
$\kappa_{\text{\tiny\ref{aljhvflajhfblajhbljh}}}\in(0,c_0]$
and $C_{\text{\tiny\ref{aljhvflajhfblajhbljh}}}\geq C_{\text{\tiny\ref{akhigvcuyecvuqyqy}}},
c_{\text{\tiny\ref{aljhvflajhfblajhbljh}}}>0$
depending only on $K,M$ with the following property.
\begin{itemize}
\item Let $h\geq C_{\text{\tiny\ref{aljhvflajhfblajhbljh}}}$, and let $B=(B_{ij})$ be a $(h-1)\times h$ random matrix,
where the entries $B_{ij}$, $1\leq i\leq h-1$, $1\leq j\leq h-1$, are i.i.d
of zero mean, unit variance, and subgaussian moment bounded above by $K$,
and the $h$--th column of $B$ is non-random. Then the event
\begin{align*}
\big\{&\mbox{There is a $(\delta,\rho)$--incompressible null unit vector $x$}\\
&\mbox{for $B$ with $\LCD_{\kappa_{\text{\tiny\ref{aljhvflajhfblajhbljh}}}\sqrt{h-1},\gamma_0}
\big(x\chi_{[h-1]}/\|x\chi_{[h-1]}\|_2\big)
\leq \exp(c_{\text{\tiny\ref{aljhvflajhfblajhbljh}}}\, h)$}\big\}
\end{align*}
has probability at most $\exp(-M\,h)$.
\item Let $h\geq C_{\text{\tiny\ref{aljhvflajhfblajhbljh}}}$, and let $B$ be a $(h-1)\times h$ random matrix
with i.i.d entries
of zero mean, unit variance, and subgaussian moment bounded above by $K$. Then the event
\begin{align*}
\big\{&\mbox{There is a $(\delta,\rho)$--incompressible null unit vector $x$}\\
&\mbox{for $B$ with $\LCD_{\kappa_{\text{\tiny\ref{aljhvflajhfblajhbljh}}}\sqrt{h},\gamma_0}(x)
\leq \exp(c_{\text{\tiny\ref{aljhvflajhfblajhbljh}}}\, h)$}\big\}
\end{align*}
has probability at most $\exp(-M\,h)$.
\end{itemize}
\end{lemma}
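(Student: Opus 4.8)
The plan is to prove the first (structured) statement by the dyadic level-set and net machinery underlying \cite[Theorem~4.3]{RV09}, and then to deduce the second (fully i.i.d.) statement from it by conditioning on the last column. Fix $M>0$ and write $\kappa,c_\star$ for the constants $\kappa_{\text{\tiny\ref{aljhvflajhfblajhbljh}}},c_{\text{\tiny\ref{aljhvflajhfblajhbljh}}}$ to be produced; set $\alpha:=\kappa\sqrt{h-1}$ with $\kappa\in(0,c_0]$ chosen below. Let $\tilde B$ denote the random $(h-1)\times(h-1)$ block of $B$, and condition on the event $\{\|\tilde B\|\leq L\sqrt{h}\}$, which by Lemma~\ref{jayucjuatrutqcwquj} has probability at least $1-\exp(-2Mh)$ for a suitable $L=L(M,K)$. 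If $x\in S^{h-1}$ is a $(\delta,\rho)$--incompressible null vector, put $r:=\|x\chi_{[h-1]}\|_2$ and $\hat x:=x\chi_{[h-1]}/r$; by Lemma~\ref{akhigvcuyecvuqyqy} we have $r\geq\rho$ and $\hat x\in\Incomp_{h-1}(\delta/2,\rho)$, and $Bx={\bf 0}$ rewrites as
$$
\tilde B\,\hat x=-\tfrac{x_h}{r}\,\col_h(B).
$$
By Lemma~\ref{aflknalkanlknjk} (applied with $\delta/2,\rho,\gamma_0$) the quantity $\LCD_{\alpha,\gamma_0}(\hat x)$ is at least $c_0\sqrt{h-1}$, so under the hypothesis $\LCD_{\alpha,\gamma_0}(\hat x)\leq\exp(c_\star h)$ the vector $\hat x$ belongs to one of the $O(h)$ dyadic level sets $S_D(\alpha,h-1)$ with $c_0\sqrt{h-1}\leq D\leq\exp(c_\star h)$.

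Fix such a level $D$. I would take the $(4\alpha/D)$--net $\Net_D$ on $S_D(\alpha,h-1)$ from Lemma~\ref{akjehbfikugvwicyqvwiy}, of cardinality at most $(1+9D/\sqrt{h-1})^{h-1}$ (this requires $\alpha\leq c_0\sqrt{h-1}$, which holds as $\kappa\leq c_0$). The ingredient forced by the non-random column is to also discretize the right-hand side $z:=-\tfrac{x_h}{r}\col_h(B)$. The key point is that, although $\|\col_h(B)\|$ is uncontrolled, the feasible targets satisfy $\|z\|=\|\tilde B\hat x\|\leq\|\tilde B\|\leq L\sqrt{h}$, so they lie on a segment of the line $\R\,\col_h(B)$ of Euclidean length at most $2L\sqrt{h}$; hence a Euclidean net of mesh $\tfrac12 t\sqrt{h-1}$ on this segment has only $O(D/\sqrt{h})$ points, where $t\asymp L\kappa\sqrt{h}/D$. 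For a net pair $(y,z_0)$ with $\|\hat x-y\|_2\leq 4\alpha/D$, the operator-norm bound gives $\|\tilde B y-z_0\|_2\leq\|\tilde B\|\,4\alpha/D+\tfrac12 t\sqrt{h-1}\leq t\sqrt{h-1}$, and Lemma~\ref{aitefvigwfvkjbaasfda} (with $v=h-1$, the \emph{fixed} vector $z_0$, and $\LCD_{\alpha,\gamma_0}(y)\geq D$) yields
$$
\Prob\big\{\|\tilde B y-z_0\|_2\leq t\sqrt{h-1}\big\}\leq\Big(C_{\text{\tiny\ref{aitefvigwfvkjbaasfda}}}t+\tfrac{C_{\text{\tiny\ref{aitefvigwfvkjbaasfda}}}}{D}+C_{\text{\tiny\ref{aitefvigwfvkjbaasfda}}}\exp(-c_{\text{\tiny\ref{aitefvigwfvkjbaasfda}}}\alpha^2)\Big)^{h-1}.
$$

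It remains to run the union bound and fix constants. Multiplying the net cardinality, the $O(D/\sqrt h)$ choices of $z_0$, and the per-pair estimate, the dominant contribution $(1+9D/\sqrt{h-1})^{h-1}(C_{\text{\tiny\ref{aitefvigwfvkjbaasfda}}}t)^{h-1}$ equals $(C'L\kappa)^{h-1}$ up to lower-order factors, since the $\sqrt{h}/D$ from $t$ cancels the $D/\sqrt{h}$ from the net; choosing $\kappa=\kappa(M,K)$ small makes this exponentially small. The term from $C_{\text{\tiny\ref{aitefvigwfvkjbaasfda}}}/D$ contributes $(C/\sqrt{h})^{h-1}$, and the term from $\exp(-c_{\text{\tiny\ref{aitefvigwfvkjbaasfda}}}\alpha^2)=\exp(-c_{\text{\tiny\ref{aitefvigwfvkjbaasfda}}}\kappa^2(h-1))$ is controlled by fixing the cutoff exponent $c_\star<c_{\text{\tiny\ref{aitefvigwfvkjbaasfda}}}\kappa^2$, so that even at $D\leq\exp(c_\star h)$ it stays exponentially small. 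Summing over the $O(h)$ dyadic levels and adding $\Prob\{\|\tilde B\|>L\sqrt h\}$ gives the bound $\exp(-Mh)$. The second statement then follows by conditioning on $\col_h(B)=:w$: the conditioned matrix has exactly the form treated above, and the restriction inequality $\dist(\theta\hat x,\Z^{h-1})\leq\dist(\theta' x,\Z^{h})$ (with $\theta=r\theta'$, $r\geq\rho$) shows that $\LCD_{\kappa\sqrt h,\gamma_0}(x)\leq\exp(c_\star h)$ forces $\LCD_{\kappa\sqrt{h-1},\gamma_0'}(\hat x)\leq\exp(c'_\star h)$ for comparable constants; since the first part applies uniformly in $w$, integrating over $w$ finishes the proof.

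The main obstacle, absent from the square-matrix setting of \cite[Theorem~4.3]{RV09}, is the non-random last column. The null equation pins $\tilde B\hat x$ to the entire line $\R\,\col_h(B)$ rather than to a single point, and the naive remedy of projecting onto $\col_h(B)^\perp$ would destroy the i.i.d.\ structure that Lemma~\ref{aitefvigwfvkjbaasfda} needs, along with the crucial $1/D$ gain that defeats the net cardinality. The resolution --- observing that the feasible targets form a segment of length $O(\sqrt h)$ \emph{regardless} of $\|\col_h(B)\|$, and are therefore covered by only $O(D/\sqrt h)$ net points on which the single-vector small-ball estimate applies with a genuinely fixed $z_0$ --- is the crux; the remainder is the standard dyadic level-set bookkeeping together with the small-$\kappa$ and small-$c_\star$ choices.
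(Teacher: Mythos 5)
Your treatment of the first bullet is, up to constant bookkeeping, exactly the paper's own argument: a dyadic decomposition over the level sets $S_D(\alpha,h-1)$, the nets of Lemma~\ref{akjehbfikugvwicyqvwiy}, the small-ball bound of Lemma~\ref{aitefvigwfvkjbaasfda}, and a one-dimensional discretization to handle the non-random column. Your net on the segment $\{z\in\R\,\col_h(B):\;\|z\|_2\leq L\sqrt h\}$ with mesh $\asymp t\sqrt{h-1}$ is the same device as the paper's arithmetic net on the scalar coefficient $x_h$ (mesh $4\alpha L\sqrt{h-1}/(D\|\col_h(B)\|_2)$, cardinality $O(D/\alpha)$); both rest on the observation you correctly identify as the crux, namely that $\|\tilde B\|\leq L\sqrt h$ caps the length of the feasible segment independently of $\|\col_h(B)\|_2$, so the extra factor in the union bound is only $O(D/\sqrt h)\leq\exp(c_\star h)$ and is absorbed by taking $\kappa$ and $c_\star$ small.

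The gap is in your deduction of the second bullet from the first. Passing from $x$ to $\hat x=x\chi_{[h-1]}/r$ rescales the witness $\theta'$ to $\theta=r\theta'$ with $r\in[\rho,1]$, so the slack $\gamma_0\theta'$ becomes $(\gamma_0/r)\,\theta\leq(\gamma_0/\rho)\,\theta$: your restriction inequality only yields $\LCD_{\kappa'\sqrt{h-1},\,\gamma_0/\rho}(\hat x)\leq\exp(c_\star h)$, with the parameter degraded from $\gamma_0$ to $\gamma_0/\rho$, as your own notation $\gamma_0'$ concedes. This is not a harmless change of constants, because $\LCD_{\alpha,\gamma}$ is non-increasing in $\gamma$: smallness of the LCD with the \emph{larger} parameter $\gamma_0/\rho$ does not imply smallness with parameter $\gamma_0$, so the event of the second bullet is not contained in the event of the first bullet, which you proved (and which is stated) only for $\gamma_0$. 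Nor can you simply re-run part one with $\gamma_0/\rho$: Lemma~\ref{aflknalkanlknjk} requires $\gamma\leq c_{\text{\tiny\ref{aflknalkanlknjk}}}(\delta/2,\rho)=\gamma_0$, so the lower bound $\LCD\geq c_0\sqrt{h-1}$ --- the starting point of the dyadic decomposition, without which the $C_{\text{\tiny\ref{aitefvigwfvkjbaasfda}}}/D$ term is useless for small $D$ --- is no longer guaranteed; and $\gamma_0/\rho$ may even exceed $1$, leaving the range of Theorem~\ref{akjakjnflkajnlkjnkjn} and Lemma~\ref{aitefvigwfvkjbaasfda} altogether. The paper sidesteps all of this: the second bullet is proved \emph{directly} by the same level-set/net argument, which is in fact simpler than the first (nets on $S_D(\alpha,h)$, small-ball probability tensorized over the $h-1$ i.i.d.\ rows, no scalar net needed); the exponent mismatch ($h$ in the net cardinality versus $h-1$ in the small-ball estimate) costs a factor at most $\exp(c_\star h)$ and is absorbed exactly as in part one. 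Replacing your conditioning/restriction reduction by this direct argument repairs the proof.
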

\begin{proof}
We will only prove the first (slightly more technical)
part of the lemma; the proof of the second part is very similar.
In view of Lemma~\ref{akhigvcuyecvuqyqy},
for every $(\delta,\rho)$--incompressible vector $x$, $x\chi_{[h-1]}/\|x\chi_{[h-1]}\|_2$
is $(\delta/2,\rho)$--incompressible.
Denote by $\tilde B$ the $(h-1)\times (h-1)$ submatrix of $B$ obtained by removing the $h$--th column.
Set $L:=C_{\text{\tiny\ref{jayucjuatrutqcwquj}}}(K,4M)$, so that $\tilde B$ satisfies
$$
\Prob\{\|\tilde B\|\geq L\sqrt{h-1}\}\leq\exp(-4M\,(h-1))\leq \exp(-2M\,h).
$$
Fix for a moment any $D\geq c_0\sqrt{h-1}$, $\kappa\in(0,c_0/4]$, set $\alpha:=\kappa\sqrt{h-1}$,
and consider the event
\begin{align*}
\Event_D:=&\big\{\mbox{There is $x\in\R^h$ with
$x\chi_{[h-1]}\in S_D(\alpha)$
with $Bx={\bf 0}$}\big\}\\
&\cap\{\|\tilde B\|\leq L\sqrt{h-1}\}.
\end{align*}
Let $\Net_D$ be a $(4\alpha/D)$--net on $S_D(\alpha)$
of cardinality at most $(1+9D/\sqrt{h-1})^{h-1}$ (the net exists according to Lemma~\ref{akjehbfikugvwicyqvwiy}),
and define a discrete subset $\Net$ of $\R$ as follows:
\begin{itemize}
\item If $\col_h(B)={\bf0}$ then $\Net:=\{0\}$;
\item Otherwise, $\Net:=\big\{\frac{4\alpha L\sqrt{h-1}}{D\|\col_h(B)\|_2}\cdot j\big\}_{j\in\Z,\,|j|\leq 
\frac{D}{4\alpha}}$.
\end{itemize}
Let $\Net_D':=\Net_D\times \Net$, so that
$$
|\Net_D'|\leq (1+9D/\sqrt{h-1})^{h-1}\cdot \Big(\frac{D}{2\alpha}+1\Big).
$$
We claim that
$$
\Event_D\subset\big\{\mbox{There is $y'\in \Net_D'$ with $\|By'\|_2\leq (8\alpha/D)L\sqrt{h-1}$}\big\}.
$$
Indeed, fix any realization of $B$ from $\Event_D$, and let $x$ be a vector from the definition of $\Event_D$.
Since $\|\tilde B\|\leq L\sqrt{h-1}$ and $\|x\chi_{[h-1]}\|_2=1$, we have
$\|\col_h(B)\|_2\,|x_h|\leq L\sqrt{h-1}$.
Define a real number $u_x$ as
\begin{itemize}
\item $u_x:=0$ if $\col_h(B)={\bf 0}$ or $x_h=0$;
\item $u_x:=\sign(x_h)\cdot\frac{4\alpha L\sqrt{h-1}}{D\|\col_h(B)\|_2}\cdot
\big\lfloor\frac{\|\col_h(B)\|_2\,|x_h|}{L\sqrt{h-1}}\cdot \frac{D}{4\alpha}\big\rfloor$, otherwise,
\end{itemize} 
and observe that $u_x\in\Net$ and that
$$\big|\|\col_h(B)\|_2\,x_h-\|\col_h(B)\|_2\,u_x\big|\leq 4\alpha L\sqrt{h-1}/D.$$
Further, let $y$ be an element of $\Net_D$
with $\|x\chi_{[h-1]}-y\|_2\leq 4\alpha/D$, and define $y':=(y,u_x)\in\Net_D'$.
Then
$$
\|By'\|_2\leq \|\tilde B(y-x)\|_2+\|\col_h(B)\|_2\,|u_x-x_h|
\leq 8\alpha L\sqrt{h-1}/D,
$$
and the claim follows.

\medskip

To estimate the probability of $\Event_D$, we apply the claim together with Lemma~\ref{aitefvigwfvkjbaasfda}.
Specifically,
\begin{align*}
\Prob&(\Event_D)\leq \sum_{y'\in\Net_D'}\Prob\big\{\|By'\|_2\leq (8\kappa\sqrt{h-1}/D)L\sqrt{h-1}\big\}\\
&\leq \bigg(\frac{8C_{\text{\tiny\ref{aitefvigwfvkjbaasfda}}}\kappa\sqrt{h-1} L}{D}
+\frac{C_{\text{\tiny\ref{aitefvigwfvkjbaasfda}}}}{D}+C_{\text{\tiny\ref{aitefvigwfvkjbaasfda}}}
\exp(-c_{\text{\tiny\ref{aitefvigwfvkjbaasfda}}}\kappa^2 (h-1))\bigg)^{h-1}\bigg(1+\frac{9D}{\sqrt{h-1}}\bigg)^{h-1}
\Big(\frac{D}{2\alpha}+1\Big).
\end{align*}
Note that for $\kappa$ sufficiently small, for $D\leq \exp(c_{\text{\tiny\ref{aitefvigwfvkjbaasfda}}}\kappa^2 (h-1))$
and for $h$
greater than a large enough constant
(depending on $\kappa,C_{\text{\tiny\ref{aitefvigwfvkjbaasfda}}}, M$), the last quantity
can be made smaller than $\exp(-2Mh)$.
Taking the union bound over the events $\Event_{2^k\,c_0\sqrt{h-1}}$,
$0\leq k\leq \log_2\frac{\exp(c_{\text{\tiny\ref{aitefvigwfvkjbaasfda}}}\kappa^2 (h-1))}{c_0\sqrt{h-1}}$,
we get
\begin{align*}
\Prob\big\{&\mbox{There is a $(\delta,\rho)$--incompressible null unit vector $x$
for $B$}\\
&\mbox{with $\LCD_{\alpha,\gamma_0}(x\chi_{[h-1]}/\|x\chi_{[h-1]}\|_2)
\leq \exp(c_{\text{\tiny\ref{aitefvigwfvkjbaasfda}}}\kappa^2 (h-1))$}\big\}\\
&\hspace{3cm}
\leq \exp(-2Mh)\cdot \bigg(1+\log_2\frac{\exp(c_{\text{\tiny\ref{aitefvigwfvkjbaasfda}}}\kappa^2 (h-1))}{c_0\sqrt{h-1}}\bigg)
+\exp(-2Mh),
\end{align*}
and the result follows.
\end{proof}

\section{Proof of Theorems~\ref{main} and~\ref{mainLP}}

In this section, we assume the same choice for parameters $\delta,\rho$ and $c_0,\gamma_0$
as in Subsection~\ref{ajgvefiayewtvfiqyfadsvcsd}.
\begin{lemma}\label{alkhjfbkefveutcfujefw}
Fix $K\geq 1$, $M>0$, let $C_{\text{\tiny\ref{aljhvflajhfblajhbljh}}},c_{\text{\tiny\ref{aljhvflajhfblajhbljh}}}>0$,
$\kappa=\kappa_{\text{\tiny\ref{aljhvflajhfblajhbljh}}}$
be as in Lemma~\ref{aljhvflajhfblajhbljh}, and let $C_{\text{\tiny\ref{akjakjnflkajnlkjnkjn}}},
c_{\text{\tiny\ref{akjakjnflkajnlkjnkjn}}}>0$ be as in Theorem~\ref{akjakjnflkajnlkjnkjn} (with parameters $K$ and $\gamma_0$).
Assume that $d\geq m\geq C_{\text{\tiny\ref{aljhvflajhfblajhbljh}}}$.
Let $\xi$ be a symmetric random variable of unit variance, and subgaussian moment bounded
above by $K$.
Further, let $X_1,\dots,X_{m-1}$ be i.i.d random vectors in $\R^d$ with
independent components equidistributed with $\xi$, let
${\bf X}$ be a random vector in $\R^d$ of Euclidean norm in the range $[\sqrt{d}/2,2\sqrt{d}]$,
independent from $\{X_1,\dots,X_{m-1}\}$,
and denote by $W$ the $d\times m$
random matrix with columns $X_1,\dots,X_{m-1}$, ${\bf X}$.
Then the event
\begin{align*}
\Big\{&\mbox{$W$ has rank $m-1$, and there is a $(\delta,\rho)$--incompressible unit vector}\\
&\mbox{$\lambda=(\lambda_1,\dots,\lambda_m)^\top$ with strictly positive components such that
$W\lambda={\bf 0}$}\Big\}
\end{align*}
has probability at most
$$
2^{1-m}
{d\choose m-1}\bigg(\Big(\frac{C_{\text{\tiny\ref{akjakjnflkajnlkjnkjn}}}}
{\exp(c_{\text{\tiny\ref{aljhvflajhfblajhbljh}}}\, m)}
+C_{\text{\tiny\ref{akjakjnflkajnlkjnkjn}}}\exp(-c_{\text{\tiny\ref{akjakjnflkajnlkjnkjn}}}\kappa^2 (m-1))\Big)^{d-m+1}
+\exp(-M\,m)\bigg).
$$
\end{lemma}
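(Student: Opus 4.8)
The plan is to prove the estimate in three stages: a sign--symmetry argument that produces the factor $2^{1-m}$ and disposes of the positivity constraint on $\lambda$; a union bound over which $(m-1)$--subset of rows forms a basis of the row space; and a dichotomy on the logarithmic distance from the lattice (LCD) of the null vector, handled by Lemma~\ref{aljhvflajhfblajhbljh} in the small--LCD regime and by Theorem~\ref{akjakjnflkajnlkjnkjn} in the large--LCD regime.

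First I would set up the symmetrization. Let $\Event$ be the event in the statement and let $\Event'$ be the same event with the positivity requirement dropped, i.e.\ $\{W$ has rank $m-1$ and its (one--dimensional) null direction $\lambda$ is $(\delta,\rho)$--incompressible$\}$. Since $\xi$ is symmetric and $X_1,\dots,X_{m-1}$ are mutually independent and independent of ${\bf X}$, for every $\varepsilon\in\{-1,1\}^{m-1}$ the matrix $W_\varepsilon$ obtained from $W$ by scaling its $j$--th column by $\varepsilon_j$ (for $j\le m-1$) has the same distribution as $W$. The null direction of $W_\varepsilon$ is $(\varepsilon_1\lambda_1,\dots,\varepsilon_{m-1}\lambda_{m-1},\lambda_m)$, which shares the coordinate magnitudes of $\lambda$, hence is incompressible precisely when $\lambda$ is. Introducing an independent uniform $\varepsilon$ and averaging over it, $\Prob(\Event)=\Exp_\varepsilon\Prob(W_\varepsilon\in\Event)=2^{1-m}\,\Exp_W\#\{\varepsilon:W_\varepsilon\in\Event\}$. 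For a fixed realisation in $\Event'$ exactly one sign pattern $\varepsilon$ makes every coordinate of the null direction share the sign of $\lambda_m$ (and none does if some $\lambda_j=0$), so $\#\{\varepsilon:W_\varepsilon\in\Event\}\le\mathbf 1_{\Event'}$ and therefore $\Prob(\Event)\le 2^{1-m}\Prob(\Event')$.

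It remains to bound $\Prob(\Event')$. On $\Event'$ the $d\times m$ matrix $W$ has rank $m-1$, so some $T\subset[d]$ with $|T|=m-1$ indexes linearly independent rows whose common orthogonal line is $\R\lambda$; a union bound over the ${d\choose m-1}$ choices of $T$ reduces matters to one fixed $T$. Conditioning on $\{\row_i(W):i\in T\}$ together with the whole last column ${\bf X}$, the null direction becomes a fixed, incompressible vector $\lambda_0$ measurable with respect to the conditioning, while the reduced rows $\{(W_{ij})_{j\le m-1}:i\notin T\}$ stay i.i.d.\ and independent of the conditioning (as all $W_{ij}$, $j\le m-1$, are jointly independent and independent of ${\bf X}$). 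Thus the $d-m+1$ constraints $\sum_{j\le m-1}W_{ij}(\lambda_0)_j=-{\bf X}_i(\lambda_0)_m$, $i\notin T$, are conditionally independent. By Lemma~\ref{akhigvcuyecvuqyqy} the normalised vector $\bar\lambda_0:=\lambda_0\chi_{[m-1]}/\|\lambda_0\chi_{[m-1]}\|_2$ is well defined and $(\delta/2,\rho)$--incompressible; set $\alpha:=\kappa_{\text{\tiny\ref{aljhvflajhfblajhbljh}}}\sqrt{m-1}$. If $\LCD_{\alpha,\gamma_0}(\bar\lambda_0)\le\exp(c_{\text{\tiny\ref{aljhvflajhfblajhbljh}}}m)$, then $\lambda$ is already an incompressible null vector of the $(m-1)\times m$ submatrix $B_T$ with small LCD, and the first part of Lemma~\ref{aljhvflajhfblajhbljh} (with $h=m$, applied after conditioning on ${\bf X}$ so that its last column is non--random) bounds the probability of this regime by $\exp(-Mm)$, with no union over $T$ needed. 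Otherwise $\LCD_{\alpha,\gamma_0}(\bar\lambda_0)>\exp(c_{\text{\tiny\ref{aljhvflajhfblajhbljh}}}m)$, and applying Theorem~\ref{akjakjnflkajnlkjnkjn} (with $\gamma_0$ and this $\alpha$) to each of the conditionally independent scalar equations gives, for each fixed $T$,
\[
\Prob\big(\langle\row_i(W),\lambda_0\rangle=0\ \forall i\notin T\mid\text{conditioning}\big)\le\Big(\tfrac{C_{\text{\tiny\ref{akjakjnflkajnlkjnkjn}}}}{\exp(c_{\text{\tiny\ref{aljhvflajhfblajhbljh}}}m)}+C_{\text{\tiny\ref{akjakjnflkajnlkjnkjn}}}\exp(-c_{\text{\tiny\ref{akjakjnflkajnlkjnkjn}}}\kappa_{\text{\tiny\ref{aljhvflajhfblajhbljh}}}^2(m-1))\Big)^{d-m+1},
\]
where I use that the stated LCD bound is in fact a Lévy concentration bound, uniform over the shift $-{\bf X}_i(\lambda_0)_m$. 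Summing over $T$ supplies the factor ${d\choose m-1}$; combining the two regimes with $\Prob(\Event)\le 2^{1-m}\Prob(\Event')$ and the trivial inequality ${d\choose m-1}\ge1$ yields the asserted bound.

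I expect the main obstacle to be the large--LCD step: one must verify that freezing a basis set $T$ of rows and conditioning on the non--random column ${\bf X}$ genuinely decouples the $d-m+1$ residual orthogonality constraints while keeping the null direction $\lambda_0$ deterministic, and then apply the single--vector small--ball estimate \emph{uniformly in the shift} created by the last column; the extraction of the sharp constant $2^{1-m}$ via the exact count of column--sign patterns aligning the null direction is the other point requiring care.
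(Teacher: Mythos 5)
Your proposal is correct and follows essentially the same route as the paper's own proof: column sign-symmetrization to extract the factor $2^{1-m}$ and dispose of the positivity constraint, a union bound over the ${d\choose m-1}$ choices of $(m-1)$ linearly independent rows, and an LCD dichotomy in which the small-LCD regime is handled by Lemma~\ref{aljhvflajhfblajhbljh} and the large-LCD regime by conditioning on the chosen rows (and on ${\bf X}$) to freeze the null direction and decouple the remaining $d-m+1$ orthogonality constraints via Theorem~\ref{akjakjnflkajnlkjnkjn}. The only deviations are harmless refinements: you note that the small-LCD case needs no union over $T$ (a null vector of $W$ is automatically a null vector of any fixed row-submatrix, whereas the paper unions the events $\tilde\Event_J$ over all $J$), and you state explicitly that Theorem~\ref{akjakjnflkajnlkjnkjn} must be invoked in its shifted (L\'evy concentration) form to absorb the term $-{\bf X}_i(\lambda_0)_m$ — a point the paper uses implicitly via \cite[Theorem~3.3]{RV09} — both of which only sharpen or clarify the stated bound.
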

\begin{Remark}
The above lemma is formulated so as to deal with both the case when ${\bf X}$
is the non-random cost vector and when ${\bf X}$ is one of the random rows of the coefficient matrix $A$.
For that reason, we allow ${\bf X}$ to be random but do not place restrictions on its distribution.
\end{Remark}
\begin{proof}
Without loss of generality, we can assume that the vector ${\bf X}$ is constant.
Denote the event in question by $\Event$.
First, consider an auxiliary event
\begin{align*}
\Event':=\Big\{&\mbox{$W$ has rank $m-1$, and there is a $(\delta,\rho)$--incompressible unit vector}\\
&\mbox{$\lambda=(\lambda_1,\dots,\lambda_m)^\top$ with non-zero components such that
$W\lambda={\bf 0}$}\Big\}.
\end{align*}
Assume that the probability of $\Event'$ is non-zero.
We can construct a mapping $Q:\Event'\to \{-1,1\}^{m}$, which maps
every point $\omega\in\Event'$ to vector
$$
\big(\sign(\lambda_1),\sign(\lambda_2),\dots,\sign(\lambda_{m})=1\big),
$$
where $\lambda$ is the $(\delta,\rho)$--incompressible unit vector with
$\lambda_{m}>0$ satisfying $W\lambda={\bf 0}$
(note that at every point of $\Event'$ the vector is uniquely determined because of the assumptions
on the matrix rank).
Conditioned on $\Event'$, $Q$ assigns equal probabilities to all of the $2^{m-1}$ admissible $\pm 1$ vectors,
which follows from the assumption that $X_i$'s are symmetrically distributed.
Thus,
$$
\Prob\big(\Event\;|\;\Event'\big)=\frac{\Prob(\Event)}{\Prob(\Event')}=2^{1-m},
$$
and it is sufficient to estimate the probability of $\Event'$.

Set $\alpha:=\kappa_{\text{\tiny\ref{aljhvflajhfblajhbljh}}}\sqrt{m-1}$.
For every subset $J\subset[d]$ of size $m-1$, let $W_J$ be the $(m-1)\times m$
submatrix of $W$ obtained by selecting rows with indices in $J$, and let
$\Event_J$ and $\tilde\Event_J$
be the events
\begin{align*}
\Event_J:=\Big\{&\mbox{$W_J$ has rank $m-1$, and there is a $(\delta,\rho)$--incompressible vector
$\lambda=(\lambda_1,\dots,\lambda_m)^\top$}\\
&\mbox{with $\LCD_{\alpha,\gamma_0}
\big(\lambda\chi_{[m-1]}/\|\lambda\chi_{[m-1]}\|_2\big)\geq \exp(c_{\text{\tiny\ref{aljhvflajhfblajhbljh}}}\, m)$
such that
$W\lambda={\bf 0}$}\Big\};\\
\tilde\Event_J:=\Big\{&\mbox{There is a $(\delta,\rho)$--incompressible unit
vector $\lambda=(\lambda_1,\dots,\lambda_m)^\top$}\\
&\mbox{with $\LCD_{\alpha,\gamma_0}\big(\lambda\chi_{[m-1]}/\|\lambda\chi_{[m-1]}\|_2\big)
< \exp(c_{\text{\tiny\ref{aljhvflajhfblajhbljh}}}\, m)$
such that
$W_J\lambda={\bf 0}$}\Big\}.
\end{align*}
First, we observe that
$$
\Event'\subset\bigcup\limits_{|J|=m-1}\Event_J\;\cup\;\bigcup\limits_{|J|=m-1}\tilde\Event_J.
$$
Further, note that in view of Lemma~\ref{aljhvflajhfblajhbljh}, the probability of each $\tilde\Event_J$
is bounded above by $\exp(-M\,m)$.
Next, conditioned on any realization of $W_J$ of rank $m-1$,
a unit vector $\lambda$ such that $W_J\lambda={\bf0}$ is uniquely determined up to a sign,
allowing to decouple the event $\{W_J\lambda={\bf0}\}$
with the events $\{\langle\row_i(W),\lambda\rangle=0\}$, $i\notin J$.
Specifically, we can write for each $J\subset[d]$ with $|J|=m-1$:
\begin{align*}
\Prob(\Event_J)
\leq \Prob\Big\{&\mbox{$W_J$ has rank $m-1$, and there is a $(\delta,\rho)$--incompressible vector
$\lambda=(\lambda_1,\dots,\lambda_m)^\top$}\\
&\mbox{with $\LCD_{\alpha,\gamma_0}
\big(\lambda\chi_{[m-1]}/\|\lambda\chi_{[m-1]}\|_2\big)
\geq \exp(c_{\text{\tiny\ref{aljhvflajhfblajhbljh}}}\, m)$
such that
$W_J\lambda={\bf 0}$}\Big\}\\
&\hspace{-0.5cm}\cdot \sup\limits_y \prod\limits_{i\notin J}\Prob\big\{\langle\row_i(W),y\rangle=0\big\},
\end{align*}
where the supremum is taken over all unit $(\delta,\rho)$--incompressible vectors $y$
with
$$\LCD_{\alpha,\gamma_0}\big(y\chi_{[m-1]}/\|y\chi_{[m-1]}\|_2\big)
\geq \exp(c_{\text{\tiny\ref{aljhvflajhfblajhbljh}}}\, m).$$
Applying Theorem~\ref{akjakjnflkajnlkjnkjn}, we get
$$
\Prob(\Event_J)\leq \bigg(\frac{C_{\text{\tiny\ref{akjakjnflkajnlkjnkjn}}}}
{\exp(c_{\text{\tiny\ref{aljhvflajhfblajhbljh}}}\, m)}+C_{\text{\tiny\ref{akjakjnflkajnlkjnkjn}}}\exp(-c_{\text{\tiny\ref{akjakjnflkajnlkjnkjn}}}\kappa^2 (m-1))\bigg)^{d-m+1}.
$$
Thus, 
$$
\Prob(\Event')\leq {d\choose m-1}\bigg(\Big(\frac{C_{\text{\tiny\ref{akjakjnflkajnlkjnkjn}}}}
{\exp(c_{\text{\tiny\ref{aljhvflajhfblajhbljh}}}\, m)}
+C_{\text{\tiny\ref{akjakjnflkajnlkjnkjn}}}\exp(-c_{\text{\tiny\ref{akjakjnflkajnlkjnkjn}}}\kappa^2 (m-1))\Big)^{d-m+1}
+\exp(-M\,m)\bigg),
$$
and the result follows.
\end{proof}

\begin{proof}[Proof of Theorems~\ref{main} and~\ref{mainLP}]
Note that in order to prove the theorems, it is sufficient to study the regime when
$d$ is large and $1.5d\leq n\leq 3d$
(instead of $1.5$ and $3$ we could pick any other constants strictly less (resp., strictly bigger) than $2$).
Let $\xi$ be a symmetric random variable of unit variance, and subgaussian moment bounded
above by $K$,
and let $X_1,\dots,X_n$ be i.i.d random vectors with independent components equidistributed with $\xi$.
Further, let $X_{n+1}:=\mathfrak c\in\R^d$ be a fixed cost vector of Euclidean norm $\sqrt{d}$. 
In view of Remark~\ref{aouvbowiuhbigiygvyrdx},
it is sufficient to show that
$$
\Prob\big\{\conv\{X_1,\dots,X_n,X_{n+1}\}\;\mbox{contains the origin on its boundary}\big\}
\leq 2\exp(-c'd)
$$
for some $c'>0$ depending only on $K$.
Denote $P_d':=\conv\{X_1,\dots,X_n,X_{n+1}\}$.
Set $M:=5$ and $R:=3$.
We will assume that the parameters $\delta,\rho,\gamma_0>0$ are as in the previous section,
that $\alpha=\kappa_{\text{\tiny\ref{aljhvflajhfblajhbljh}}}\sqrt{d}$
and $C_{\text{\tiny\ref{aljhvflajhfblajhbljh}}},c_{\text{\tiny\ref{aljhvflajhfblajhbljh}}}$
are as in Lemma~\ref{aljhvflajhfblajhbljh} (with our choice of $M$).
We will further assume that
$d\geq \max(C_{\text{\tiny\ref{aljhvflajhfblajhbljh}}},C_{\text{\tiny\ref{akhigvcuyecvuqyqy}}})$ and $\delta n\geq
\max(\sqrt{d},C_{\text{\tiny\ref{aljhvflajhfblajhbljh}}})$.
We claim that
\begin{align*}
\Prob&\big\{\mbox{$P_d'$ contains the origin on its boundary}\big\}\\
&\hspace{1cm}\leq \Prob(\Event_1)
+\Prob(\Event_2)
+\Prob(\Event_3)
+\Prob(\Event_4)
+\Prob(\Event_5)
+\sum_{J\subset I\subset[n+1]:\,\sqrt{d}\leq |J|\leq d,\;|I|=d}\Prob(\Event_{J,I}\setminus (\Event_4\cup\Event_5)),
\end{align*}
where
\begin{align*}
\Event_1&:=\big\{\mbox{$P_d'$ has dimension at most $d-1$ and contains the origin}\big\},\\
\Event_2&:=
\big\{
\mbox{
There is a $(\delta,\rho)$--compressible unit vector}\\
&\hspace{0.9cm}\mbox{$y\in\R^d$ such that $\langle X_i,y\rangle\leq 0$ for all $i\leq n+1$}
\big\},\\
\Event_3&:=\big\{
\mbox{There are $i_1<\dots<i_d$ so that 
$\conv\{X_{i_1},\dots,X_{i_d}\}$ 
contains the origin,}\\
&\hspace{0.9cm}\mbox{and
a $(\delta,\rho)$--incompressible vector $y$ orthogonal to $X_{i_1},\dots,X_{i_d}$,}\\
&\hspace{0.9cm}\mbox{satisfies
$\LCD_{\alpha,\gamma_0}(y)\leq \exp(c_{\text{\tiny\ref{aljhvflajhfblajhbljh}}}\, d)$}
\big\},\\
\Event_4&:=\Big\{
\mbox{There is a $(\delta,\rho)$--compressible vector $z=(z_1,\dots,z_{n+1})\in S^{n}$ such that
$\sum\limits_{i=1}^{n+1} z_i\,X_i={\bf 0}$}
\Big\},\\
\Event_5&:=\big\{\mbox{$\|X_i\|_2\notin[\sqrt{d}/2,2\sqrt{d}]$ for some $1\leq i\leq n$}\big\},
\end{align*}
and for every $J\subset I\subset[n+1]$ with $\sqrt{d}\leq |J|\leq d$ and $|I|=d$,
\begin{align*}
\Event_{J,I}:=\big\{&\mbox{$X_i,\;i\in I$ are affinely independent,
$\conv\{X_i,\;i\in I\}$ is on the boundary of $P_d'$;}\\
&\mbox{$\conv\{X_i,\;i\in J\}$ contains the origin in its relative interior,}\\
&\mbox{and a unit vector $y$ orthogonal to $X_i,\;i\in I$, satisfies $\LCD_{\alpha,\gamma_0}(y)
\geq \exp(c_{\text{\tiny\ref{aljhvflajhfblajhbljh}}}\, d)$}
\big\}.
\end{align*}
Indeed, consider any point $\omega$ of the probability space
at the intersection of $(\Event_1\cup\Event_2\cup\Event_4)^c$ and the event
that $P_d'$ contains the origin on its boundary.
Necessarily, at $\omega$ there are $d$ affinely independent vertices $X_{i_1},\dots,X_{i_d}$
so that $\conv\{X_{i_1},\dots,X_{i_d}\}$ is part of the boundary of $P_d'$
and contains ${\bf 0}$.
Let $y$ be a unit vector orthogonal to $X_{i_1},\dots,X_{i_d}$,
and note that, being in the complement of $\Event_2$,
we get that $y$ is $(\delta,\rho)$--incompressible. If $\LCD_{\alpha,\gamma_0}(y)
\leq \exp(c_{\text{\tiny\ref{aljhvflajhfblajhbljh}}}\, d)$
then $\omega\in\Event_3$. Otherwise, set $I:=\{i_1,\dots,i_d\}$, and let $J$ be a subset of $I$
such that ${\bf 0}$ is contained in the relative interior of $\conv\{X_i,\;i\in J\}$ (note that $J$
exists and has size more than $\sqrt{d}$, since $\omega\in\Event_4^c$).
Then, by the above definition, $\omega\in \Event_{J,I}$, and the claim is verified.
Next, we estimate probabilities of the above events.

\medskip

The probability of $\Event_1$
can be bounded above by the probability of the event that the smallest singular
value of the $d\times n$ matrix with columns $X_i$, $1\leq i\leq n-1$, $X_n-X_{n+1}$, is zero.
The matrix can be viewed as a non-random shift
(with the shift matrix of spectral norm $\sqrt{d}$) of 
the centered $d\times n$ matrix with i.i.d subgaussian entries of unit variance.
Invertibility of such matrices has been 
extensively studied in the literature; for our purposes it suffices to apply the result of \cite{PZ10}
to infer
$$
\Prob(\Event_1)\leq 2\exp(-c_1n),
$$
where $c_1>0$ may only depend on $K$.

\medskip

Further, applying Lemma~\ref{adslfknaflknkjnkjn}, we get
\begin{align*}
\Prob(\Event_2)\leq \Prob\big\{&\mbox{There is a $(\delta,\rho)$--compressible unit vector}\\
&\mbox{$y\in\R^d$ such that $\langle X_i,y\rangle\leq 0$ for all $i\leq n$}\big\} 
\leq 2\exp(-c_{\text{\tiny\ref{adslfknaflknkjnkjn}}} n).
\end{align*}

\medskip

To estimate the probability of $\Event_3$, we apply the second part of
Lemma~\ref{aljhvflajhfblajhbljh} and a union bound argument.
Observe that, in view of equidistribution of $X_1,\dots,X_n$,
\begin{align*}
\Prob\big(\Event_3\big)\leq {n\choose d-1}\Prob\big\{
&\mbox{There is a $(\delta,\rho)$--incompressible vector $y$}\\
&\mbox{orthogonal to $X_1,\dots,X_{d-1}$, with
$\LCD_{\alpha,\gamma_0}(y)\leq \exp(c_{\text{\tiny\ref{aljhvflajhfblajhbljh}}}\, d)$}
\big\},
\end{align*}
and hence, by Lemma~\ref{aljhvflajhfblajhbljh} and our choice of $M$,
$\Prob\big(\Event_3\big)\leq 2^{3d} \exp(-Md)< \exp(-d)$.

\medskip

By our choice of parameters, $\Prob(\Event_4)\leq 2\exp(-c_{\text{\tiny\ref{aljhfakfjhebfkajhbfkahvaghf}}}n)$,
by Lemma~\ref{aljhfakfjhebfkajhbfkahvaghf}.

\medskip

A standard concentration estimate for the Euclidean norm of random vectors with i.i.d subgaussian
components of unit variance (see, for example, \cite[Section~3.1]{VershyninBook}) implies
$$
\Prob(\Event_5)\leq 2\exp(-c_5d)
$$
for some $c_5>0$ depending only on $K$.

\medskip

It remains to estimate the probabilities of the events $\Event_{J,I}\setminus (\Event_4\cup\Event_5)$.
Let $I\subset[n+1]$ be any fixed $d$--subset, and let $J\subset I$ satisfy $|J|\geq \sqrt{d}$.
Denote by $W_J$ the random $d\times |J|$ matrix with columns $X_i,\;i\in J$
(we will arrange the column vectors according to their indices), and let $i_{\max}$ be the largest element of $J$.
Note that the matrix $W=W_J$ satisfies the assumptions of Lemma~\ref{alkhjfbkefveutcfujefw}
with ${\bf X}=X_{i_{\max}}$
(our intention here is to treat both cases $i_{\max}=n+1$, when $X_{i_{\max}}$ is constant,
and $i_{\max}\neq n+1$, when $X_{i_{\max}}$ is equidistributed with the rest of the vectors $X_i,\;i\in J$,
in a uniform way, without splitting the proof into subcases).
Observe that
\begin{align*}
\Event_{J,I}\setminus (\Event_4\cup\Event_5)\subset
\big\{&\mbox{$W_J$ has rank $|J|-1$, the Euclidean norm
of $X_{i_{\max}}$ is in the range}\\
&\mbox{$[\sqrt{d}/2,2\sqrt{d}]$,
and there is a $(\delta,\rho)$--incompressible unit vector}\\
&\mbox{$\lambda=(\lambda_1,\dots,\lambda_{|J|})^\top$ with strictly positive components such that
$W_J\lambda={\bf 0}$}\big\}\\
\cap\,
\big\{
&\mbox{$X_i,\;i\in I$ are affinely independent, $\conv\{X_i,\;i\in I\}$ is on the boundary}\\
&\mbox{of $P_d'$, and a unit vector $y$ orthogonal to $X_i,\;i\in I$,}\\
&\mbox{satisfies $\LCD_{\alpha,\gamma_0}(y)
\geq \exp(c_{\text{\tiny\ref{aljhvflajhfblajhbljh}}}\, d)$}
\big\}.
\end{align*}
Note that the vector $y$ in the description of the last event is uniquely determined by $X_i,\;i\in I$
up to a sign, and that the requirement that $\conv\{X_i,\;i\in I\}$ is [a subset of] a facet of $P_d'$
implies that the inner products $\langle y,X_i\rangle$, $i\notin I$, are either all non-positive
or all non-negative. The $\sigma(X_i,\;i\in I)$--measurability of $\{\pm y\}$ then allows to 
condition on any admissible realization of $X_i,\;i\in I$, and $\pm y$ and use the conditional independence
and identical distribution
of $\langle y,X_i\rangle$, $i\notin I\cup\{n+1\}$, to get, in view of Lemma~\ref{alkhjfbkefveutcfujefw},
\begin{align*}
\Prob&\big(\Event_{J,I}\setminus (\Event_4\cup\Event_5)\big)\\
&\leq 2^{1-|J|}
{d\choose |J|-1}\bigg(\Big(\frac{C_{\text{\tiny\ref{akjakjnflkajnlkjnkjn}}}}
{\exp(c_{\text{\tiny\ref{aljhvflajhfblajhbljh}}} |J|)}
+C_{\text{\tiny\ref{akjakjnflkajnlkjnkjn}}}\exp(-c_{\text{\tiny\ref{akjakjnflkajnlkjnkjn}}}
\kappa_{\text{\tiny\ref{aljhvflajhfblajhbljh}}}^2 (|J|-1))\Big)^{d-|J|+1}
+\exp(-4|J|)\bigg)\\
&\cdot 2
\Big(\sup\big\{\Prob\{\langle X_1,y'\rangle\geq 0\}:\;\;y'\in S^{d-1},\;\LCD_{\alpha,\gamma_0}(y')
\geq \exp(c_{\text{\tiny\ref{aljhvflajhfblajhbljh}}}\, d)\big\}\Big)^{n-d},
\end{align*}
where $C_{\text{\tiny\ref{akjakjnflkajnlkjnkjn}}}=C_{\text{\tiny\ref{akjakjnflkajnlkjnkjn}}}(K,\gamma_0)$ and
$c_{\text{\tiny\ref{akjakjnflkajnlkjnkjn}}}=c_{\text{\tiny\ref{akjakjnflkajnlkjnkjn}}}(K,\gamma_0)$.
Since the distribution of $X_1$ is symmetric, we have
$$
2\,\Prob\{\langle X_1,y'\rangle\geq 0\}=1+\Prob\{\langle X_1,y'\rangle= 0\},
$$
where, by Theorem~\ref{akjakjnflkajnlkjnkjn}, for every $y'$ with $\LCD_{\alpha,\gamma_0}(y')
\geq \exp(c_{\text{\tiny\ref{aljhvflajhfblajhbljh}}}\, d)$,
$$
\Prob\{\langle X_1,y'\rangle= 0\}\leq
\frac{C_{\text{\tiny\ref{akjakjnflkajnlkjnkjn}}}}{\exp(c_{\text{\tiny\ref{aljhvflajhfblajhbljh}}}\, d)}
+C_{\text{\tiny\ref{akjakjnflkajnlkjnkjn}}}
\exp(-c_{\text{\tiny\ref{akjakjnflkajnlkjnkjn}}}\alpha^2).
$$
Combining all the estimates together, we get
$$
\Prob\big(\Event_{J,I}\setminus (\Event_4\cup\Event_5)\big)\leq C\,2^{-|J|-n+d}{d\choose |J|-1}
\exp\big(-\min(c_{\text{\tiny\ref{akjakjnflkajnlkjnkjn}}}
\kappa_{\text{\tiny\ref{aljhvflajhfblajhbljh}}}^2/2, c_{\text{\tiny\ref{aljhvflajhfblajhbljh}}},4)\,|J|\big),
$$
where $C>0$ depends on $K$.
The last estimate is sufficiently strong in the regime when $|J|$ is close to $d$.
Specifically, denoting $c:=\min(c_{\text{\tiny\ref{akjakjnflkajnlkjnkjn}}}
\kappa_{\text{\tiny\ref{aljhvflajhfblajhbljh}}}^2/2, c_{\text{\tiny\ref{aljhvflajhfblajhbljh}}},4)$
and assuming that an integer $\ell\in[\sqrt{d},d]$ satisfies
\begin{equation}\label{apkjfnfbfygviy}
\bigg(\frac{e\sqrt{2}d}{d-(\ell-1)}\bigg)^{2d-2\ell}\leq \exp(c\ell/2),
\end{equation}
we have, in view of the above,
\begin{align*}
\sum_{J\subset I\subset[n+1]:\,|J|=\ell,\;|I|=d}\Prob(\Event_{J,I}\setminus (\Event_4\cup\Event_5))
&\leq {n+1\choose d}{d\choose \ell}\,C\,2^{-\ell-n+d}{d\choose \ell-1}\exp(-c\ell)\\
&\leq C\,\exp(-c\ell)\bigg(\frac{e\sqrt{2}d}{d-(\ell-1)}\bigg)^{2d-2\ell+2}\\
&\leq C\,2e^2d^2\,\exp(-c\ell/2).
\end{align*}
For $\ell$ not satisfying \eqref{apkjfnfbfygviy}, we shall apply Lemma~\ref{ajhgcvutyrwvutqwrc}.
Again, let $i_{\max}$ be the largest index of $J$.
Observe that
\begin{align*}
\Event_{J,I}\setminus (\Event_4\cup\Event_5)\subset\big\{
&\mbox{$\|X_{i_{\max}}\|_2\in[\sqrt{d}/2,2\sqrt{d}]$,}\\
&\mbox{and
there is a $(\delta,\rho)$--incompressible vector $\lambda$ satisfying $W_J\lambda={\bf 0}$}
\big\},
\end{align*}
and that, whenever $\ell$ does not satisfy \eqref{apkjfnfbfygviy}, necessarily
$$
d/\ell\geq 1+s
$$
for some $s>0$ depending only on $K$. Applying Lemma~\ref{ajhgcvutyrwvutqwrc} with this
choice of $s$ and with $M=5$, we obtain
$$
\Prob\big(\Event_{J,I}\setminus (\Event_4\cup\Event_5)\big)\leq
\exp(-5d),
$$
and the union bound argument gives
$$
\sum_{J\subset I\subset[n+1]:\,|J|=\ell,\;|I|=d}\Prob(\Event_{J,I}\setminus (\Event_4\cup\Event_5))
\leq \,2^d\,2^{n+1}\,\exp(-5d)<\exp(-d).
$$
Finally, combining the two bounds for ``large'' and ``small'' $\ell$, we get
$$
\sum_{J\subset I\subset[n+1]:\,\sqrt{d}\leq |J|\leq d,\;|I|=d}\Prob(\Event_{J,I}\setminus (\Event_4\cup\Event_5))
\leq C''\,d^3\exp(-c''d)
$$
for some $c'',C''>0$ depending only on $K$.
Collecting together all the bounds from the proof, we obtain
$$
\Prob\big\{\mbox{$P_d'$ contains the origin on its boundary}\big\}\leq \tilde C\,d^3\exp(-\tilde cd)
$$
for some $\tilde c,\tilde C>0$ depending only on $K$.
By redefining the constants once more, we get the result.
\end{proof}

\section{Further questions}

\begin{Problem*}[Convex hull of sparse random vectors]
Let $X_1,\dots,X_n$ be random vectors with i.i.d components equidistributed with a product $b\xi$,
where $\xi$ is a symmetric subgaussian variable of unit variance, $b$ is Bernoulli($p$), and $\xi$ and $b$
are independent.
\begin{itemize}
\item We expect that for $p=\omega(\log d/d)$,
$$\Prob\big\{\conv\{X_1,\dots,X_n\}\mbox{ does not contain the origin}\big\}
=2^{-n+1}\sum_{k=0}^{d-1}{n-1\choose k}-o(1).$$
\item Assuming the above is true,
it is of theoretical interest to determine the critical sparsity
at which the above formula breaks. It seems plausible that, up to a $1\pm o(1)$
multiple, the critical value is a solution of the equation $(1-p)^{2d}=1/d$ i.e roughly
corresponds to the value of $p$ at which the $2d\times d$ matrix with i.i.d entries
equidistributed with $b\xi$ has a zero column with a constant (non-vanishing and not approaching one) probability.
\end{itemize}
\end{Problem*}

\begin{Problem*}[Non-symmetric distributions]
It was shown by Wagner\ and\ Welzl \cite{WW01} that, for i.i.d random vectors $X_1,\dots,X_n$ in $\R^d$
with absolutely continuous distributions, 
$$
\Prob\big\{\conv\{X_1,\dots,X_n\}\mbox{ contains the origin}\big\}\leq
1-2^{-n+1}\sum_{k=0}^{d-1}{n-1\choose k}.
$$
It may be expected that under the additional assumption that the coordinates of $X_i$'s
are i.i.d mean zero random variables, the inequality can be reversed up to losing an $o(1)$ term:
$$
\Prob\big\{\conv\{X_1,\dots,X_n\}\mbox{ contains the origin}\big\}\geq
1-2^{-n+1}\sum_{k=0}^{d-1}{n-1\choose k}-o(1).
$$
\end{Problem*}

\end{document}